\newtheorem{theorem}{Theorem}[section]
\newtheorem{utv*}{Proposition}
\newtheorem{hyp*}{Conjecture}
\newtheorem{lemma}[theorem]{Lemma}
\newtheorem{corollary}[theorem]{Corollary}
\newtheorem{defin}{Definition}
\newtheorem*{th*}{Theorem}
\numberwithin{equation}{section}
\newcommand{\ci}[1]{_{ {}_{\scriptstyle #1}}}
\newcommand{\f}{\varphi}
\newcommand{\R}{\mathbb{R}}
\newcommand{\Om}{\Omega}
\newcommand{\M}{\mathcal{M}}
\renewcommand{\d}{\delta}
\newcommand{\cz}{Calder\'{o}n--Zygmund\ }
\newcommand{\la}{\lambda}
\newcommand{\av}[2]{\left\langle #1\right\rangle_{_{\scriptstyle #2}}}
\def\cyr{\fontencoding{OT2}\fontfamily{wncyr}\selectfont}
\DeclareTextFontCommand{\textcyr}{\cyr}
\newcounter{vremennyj}
\newcommand{\ili}{\int\limits}
\newcommand{\sli}{\sum\limits}
\newcommand{\ve}{\varepsilon}
\newcommand{\ep}{\varepsilon}
\newcommand{\B}{\mathcal{B}}
\begin{document}

%%%%%%%%%%%%%%
%%%%%%%%%%%%%%
\title{Extremizers and Bellman function for martingale weak type inequality}

\author{Alexander Reznikov}
\address{Department of Mathematics,  Michigan State University, East
Lansing, MI 48824, USA}
\email{reznikov@ymail.com}

\author{Vasily Vasyunin}
\address{V. A. Steklov Math. Inst., Russian Academy of Science}
\email{vasyunin@pdmi.ras.ru}
\thanks{Work of V.~Vasyunin is supported  by the grant}

\author{Alexander Volberg}
\thanks{Work of A.~Volberg is supported  by the NSF under the grants  DMS-0758552, DMS-1265549
}
\address{Department of Mathematics, Michigan State University, East
Lansing, MI 48824, USA}
\email{volberg@math.msu.edu}
\urladdr{http://sashavolberg.wordpress.com}

%\maketitle

\makeatletter
\@namedef{subjclassname@2010}{
  \textup{2010} Mathematics Subject Classification}
\makeatother

\subjclass[2010]{42B20, 42B35, 47A30}

% 42B	Harmonic analysis in several variables
% 42B20	Singular and oscillatory integrals (Calder?on-Zygmund, etc.)
% 42B35	Function spaces arising in harmonic analysis

% 47A	General theory of linear operators
% 47A30	Norms (inequalities, more than one norm, etc.)

%{30E20, 47B37, 47B40, 30D55.}
%
% 30D55	$H^p$-classes (1980-2009)
% 30E20	Integration, integrals of Cauchy type, integral representations of analytic functions
%
% 47B   	Special classes of linear operators
% 47B37	Operators on special spaces (weighted shifts, operators on sequence spaces, etc.)
% 47B40	Spectral operators, decomposable operators, well-bounded operators, etc.

\keywords{\cz operators, martingale transform, weak type estimates, Bellman function, extremal functions, stopping time}

   \begin{abstract}
We give an exact formula for the Bellman function of the weak type of martingale transform. We also give the extremal functions
(actually extremal sequences of functions). We find them using the precise form of the Bellman function. The extremal examples have a fractal nature as it often happens in that kind of problems. This article is devoted to the unweighted weak type estimate.\end{abstract}

\date{}
\maketitle

%%%%%%%%%%%%%%%%%%%
%%%%%%%%%%%%%%%%%%%

\section{Introduction}
In any harmonic analysis course it is proved that a Hilbert Transform $H$ satisfies the following weak $(1,1)$ inequality:
\begin{equation}\label{weakweak}
\left|\left\{x\colon |Hf(x)|\geqslant \la \right\}\right| \leqslant C\frac{\|f\|_1}{\la}, \;\;\; \forall \la>0, \;\;\; \forall f\in L^1.
\end{equation}
Here $|\cdot |$ denotes Lebesgue measure.
This inequality is proved by means of a famous Calder\'on-Zygmund decomposition of the function $f$. 
In this paper we present an alternative proof of \eqref{weakweak} for operators
$$
T_\ep f(x)=\sli_{I\subset I_0, \; I\in D} \ep_I (\varphi, h_I)h_I(x), \;\;\; |\ep_I|=1 \;\; \forall I\in D.
$$
instead of $H$. Here $I_0:=[0,1]$. $D$ denotes the dyadic lattice. $h\ci I$ are normalized in $L^2(\R)$ Haar function of
the cube (interval) $I$
$$
h\ci I(t):=
\begin{cases}
\phantom{-}\frac1{\sqrt{|I|}}\,,\quad& t\in I_+
\\
-\frac1{\sqrt{|I|}}\,,& t\in I_-
\end{cases}.
$$
Here $I_\pm$ are two halves of the
interval $I$.

\section{Bellman function}
Introduce a function 
$$
\mathcal{B}_0(\la, f, F) = \sup \left| \left\{x\colon \sli_{I\subset I_0, \; I\in D} \ep_I (\varphi, h_I)h_I(x) \geqslant \la \right\}\right|,
$$
where the supremum is taken over all families $\{\ep_I\}$ such that $|\ep_I|=1$, and all functions $\varphi$ with $\av{|\varphi|}{I_0}=F$, $\av{\varphi}{I_0}=f$.

Let $\Omega_0 = \{(\la, f, F)\colon F\geqslant |f|\}$ be the domain of $\mathcal{B}_0$.

Denote
$$
B_0(\la, f, F)=\begin{cases} 1, &\la \leqslant F \\
1-\frac{(\la-F)^2}{\la^2-f^2}, &\la\geqslant F, \end{cases} \; \; \;\;\;\;\;\; (F,f,\la)\in \Omega_0.
$$
Our main theorem is the following.
\begin{theorem}
For any $(\la, f, F)\in \Omega_0$ it holds that $\mathcal{B}_0(F, f, \la)=B_0(F, f, \la)$.
\end{theorem}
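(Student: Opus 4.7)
The plan is to prove both inequalities, $\mathcal{B}_0\le B_0$ and $\mathcal{B}_0\ge B_0$, by the standard Bellman-function method.

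For the upper bound, I would first derive the relevant \emph{main inequality} from the definition of $\mathcal{B}_0$. Given $\varphi$ with $\langle\varphi\rangle_{I_0}=f$, $\langle|\varphi|\rangle_{I_0}=F$ and any signs $\{\epsilon_I\}$, split $I_0$ into its two halves and write $f_\pm=\langle\varphi\rangle_{I_\pm}$, $F_\pm=\langle|\varphi|\rangle_{I_\pm}$, $a=(f_+-f_-)/2$. The single Haar coefficient at scale $I_0$ shifts the threshold on $I_\pm$ by $\mp\epsilon_{I_0}a$, so
\[
|\{T_\epsilon\varphi\ge\lambda\}|\le \tfrac12\bigl[\mathcal{B}_0(\lambda-\epsilon_{I_0}a,f_+,F_+)+\mathcal{B}_0(\lambda+\epsilon_{I_0}a,f_-,F_-)\bigr].
\]
It therefore suffices to show that $B_0$ satisfies the analogous inequality for every admissible splitting and every $\epsilon=\pm 1$. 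Granted this, I would iterate down the dyadic tree for a dyadic step function $\varphi$ at scale $2^{-N}$; at the finest scale $F_I=|f_I|$, and one checks directly that $\mathbf{1}_{T_\epsilon\varphi|_I\ge\lambda}\le B_0(\lambda-T_\epsilon\varphi|_I,f_I,|f_I|)$ since $B_0\ge0$ everywhere and equals $1$ whenever the residual threshold is non-positive. Passing to the limit in $N$ and using density of dyadic step functions in $L^1$ then yields $\mathcal{B}_0\le B_0$.

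For the lower bound, I would construct a fractal extremizing sequence. The form of $B_0$ in the interior region $\lambda>F$ suggests that, at each dyadic scale, one should choose a splitting $(f_\pm,F_\pm)$ and a sign $\epsilon_{I_0}$ that saturates the main inequality for $B_0$. In practice this amounts to driving one child to the boundary $\{F=|f|\}$ (where $B_0$ simplifies to $2|f|/(\lambda+|f|)$ and propagates along ``characteristics'' of the Bellman PDE) while recursing on the other child, whose $(\lambda',f',F')$ remains in the interior. Iterating yields a function $\varphi_n$ supported on a geometrically decreasing collection of dyadic intervals, of explicit fractal shape, for which $|\{T_{\epsilon^{(n)}}\varphi_n\ge\lambda\}|\to B_0(\lambda,f,F)$.

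The main technical obstacle is verifying the main inequality for $B_0$. Because $B_0$ is defined piecewise, with a seam at $\{\lambda=F\}$, and the admissible Bellman directions form a two-parameter family (in $a$ and in $\Delta F=(F_+-F_-)/2$), one must handle the mixed case in which one child lies in $\{\lambda\le F\}$ and the other in $\{\lambda\ge F\}$, and verify concavity along the full family of admissible directions. I would approach this by an infinitesimal (Hessian) calculation inside each of the two pieces, combined with a first-order matching check at the seam, supplemented by direct algebraic manipulations using the identity $B_0=(2\lambda F-F^2-f^2)/(\lambda^2-f^2)$ in the non-trivial region.
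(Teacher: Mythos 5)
Your proposal takes essentially the same route as the paper: establish a Bellman main inequality for $B_0$ and iterate it down the dyadic tree for the upper bound, then build a self-similar fractal extremizing sequence for the lower bound. The paper first rephrases the problem in terms of the martingale transform $\psi$ with average $g=\langle\psi\rangle_{I_0}=-\lambda$ and then changes to $(y_1,y_2,F)$-coordinates before carrying out a fully explicit five-point self-similar splitting, but your direct $\lambda$-formulation is equivalent, and your identification of the obstacles (the two-parameter family of Bellman directions including arbitrary splittings of $F$, the seam at $\{\lambda=F\}$, and the base case on $\{F=|f|\}$ where $B_0=2|f|/(\lambda+|f|)$) matches the paper's; the paper likewise states the main inequality for $B_0$ without proving it. One caution on the upper bound: invoking ``density of dyadic step functions in $L^1$'' is not quite the right closing step, since $T_\epsilon$ is not $L^1$-bounded — the approximants must be the conditional expectations $E_N\varphi$ along the dyadic filtration, which commute with $T_\epsilon$, and the limit should be taken via a.e.\ convergence at Lebesgue points together with the continuity, boundedness, and monotonicity in $F$ of $B_0$, as the paper does.
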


Firstly, it will be more convenient to work with a slightly modified function. We need a definition.
\begin{defin}
A function $\psi$ is called a martingale transform of a function $\varphi$, if for some family $\{\ep_I\}$, with $|\ep_I|=1$, 
$$
\psi(x)=\av{\psi}{I_0} + \sli_{I\subset I_0, \; I\in D} \ep_I (\varphi, h_I)h_I(x), \;\;\; x\in I_0.
$$
\end{defin}
Denote
$$
\mathcal{B}(g, f, F) = \sup \left| \left\{x\colon \psi(x)\geqslant 0 \right\}\right|,
$$
where the supremum is taken over all functions $\varphi$  with $\av{|\varphi|}{I_0}=F$, $\av{\varphi}{I_0}=f$, and all martingale transforms $\psi$ of $\varphi$ with $\av{\psi}{I_0}=g$.
It is easy to see that 
$$
\mathcal{B}_0(\la, f, F)=\mathcal{B}(-g, f, F).
$$
Denote $\Omega = \{(g, f, F)\colon F\geqslant |f|\}$ and
$$
B(g, f, F)=\begin{cases} 1, &-g \leqslant F \\
1-\frac{(g+F)^2}{g^2-f^2}, &-g\geqslant F, \end{cases} \; \; \;\;\;\;\;\; (g,f,F)\in \Omega.
$$
Then our main theorem is equivalent to the following one.
\begin{theorem}
For any $(g, f, F)\in \Omega$ it holds $\mathcal{B}(g, f, F)=B(g, f, F)$.
\end{theorem}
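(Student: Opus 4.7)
I would follow the standard Bellman function template: prove the upper bound $\mathcal{B}(g,f,F)\leq B(g,f,F)$ via a dyadic concavity (``main'') inequality on $B$ iterated across scales, and match it with a lower bound $\mathcal{B}\geq B$ produced by an explicit self-similar sequence of admissible pairs $(\varphi_n,\psi_n)$ read off from the foliation of $B$.

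\textbf{Upper bound.} The first step is to verify the main inequality: whenever $(g,f,F)\in\Omega$ and $(g_\pm,f_\pm,F_\pm)\in\Omega$ satisfy
\[
f=\tfrac12(f_++f_-),\ \ F=\tfrac12(F_++F_-),\ \ g=\tfrac12(g_++g_-),\ \ g_+-g_-=\pm(f_+-f_-),
\]
then $B(g,f,F)\geq\tfrac12[B(g_+,f_+,F_+)+B(g_-,f_-,F_-)]$. On $\{-g\leq F\}$ this is trivial since $B\equiv 1$ there and $B\leq 1$ throughout $\Omega$. On the open region $\{-g>F\}$, $B$ is a smooth rational function, so I would reduce to a restricted-Hessian check along the two ``martingale planes'' $\{\Delta g=\pm\Delta f\}$, exploiting the factorization $g^2-f^2=(g-f)(g+f)$; across the seam $\{-g=F\}$ both pieces equal $1$, so a chord/continuity argument handles the mixed case where one of $(g_\pm,f_\pm,F_\pm)$ lies on each side. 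With the main inequality in hand, I would iterate it over the dyadic averages $g_I,f_I,F_I$ of $\psi,\varphi,|\varphi|$ to obtain
\[
|I_0|\,B(g,f,F)\geq\sum_{I\in D_n}|I|\,B(g_I,f_I,F_I)
\]
for every generation $D_n$, and pass to the limit via Lebesgue differentiation (with $B\in[0,1]$ giving dominated convergence) together with the one-line obstacle bound $B(g,f,|f|)\geq\mathbf{1}_{g\geq 0}$ read directly from the formula. This yields $B(g,f,F)\geq|\{\psi\geq 0\}|$ and, after taking the supremum, $B\geq\mathcal{B}$.

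\textbf{Lower bound.} Inside $\{-g>F\}$ the Hessian of $B$ restricted to the martingale planes has a one-dimensional kernel, producing a foliation of the region by extremal segments along which the main inequality is an equality. I would parameterize this foliation explicitly from $B=1-(g+F)^2/(g^2-f^2)$ and, at each dyadic split, move the two halves of the current interval along the extremal segment through the current point: one half is driven toward the neutral wall $\{-g=F\}$ (where $B=1$ is attained trivially) while the other is pushed further into $\{-g>F\}$. Iterating this self-similar construction $n$ times produces the admissible pair $(\varphi_n,\psi_n)$, and $|\{\psi_n\geq 0\}|$ converges from below to $B(g,f,F)$ as $n\to\infty$, delivering the fractal extremizing sequence promised in the abstract.

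\textbf{Main obstacle.} The two hardest steps I anticipate are (i) justifying the main inequality across the seam $\{-g=F\}$, where $B$ is only $C^0$ and an interior-Hessian check alone is insufficient, requiring a global chord/concavity argument tailored to the algebraic form of $B$; and (ii) arranging the fractal construction so that the martingale-transform constraint $g_+-g_-=\pm(f_+-f_-)$ is respected at every split, all intermediate data remain in $\Omega$, and the convergence of the distribution functions to $B(g,f,F)$ is quantitative enough to close the matching lower bound. In both places it is the specific structure of the formula, through the factorization of $g^2-f^2$ and the seam $g+F=0$, that will have to do the work.
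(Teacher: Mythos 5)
Your plan matches the paper's proof essentially step for step: the upper bound via the midpoint concavity (``main'') inequality for the candidate $B$ along martingale directions, iterated over dyadic generations and closed with Lebesgue differentiation plus the obstacle condition $B(g,f,|f|)=\mathbf{1}_{g\geq 0}$; and the lower bound via a self-similar extremizing sequence built by splitting along the extremal (Monge--Amp\`ere) foliation through a neutral wall point, with one half of each interval driven toward $\{-g=F\}$ where $B\equiv 1$ and the other half pushed deeper into $\{-g>F\}$. The paper also supplements your explicit construction with a second, purely abstract proof of $\mathcal{B}\geq B$ that iterates the concavity of $\mathcal{B}$ itself (proved by concatenating near-optimal test pairs), and — like you — it states the concavity of the candidate $B$ as a lemma but does not spell out the Hessian/seam verification, so that remaining obstacle is common to both.
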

\begin{corollary}\label{cor1}
For any function $\varphi\in L^1$, any number $\la\geqslant 0$ and any family $\{\ep_I\}$ with $|\ep_I|=1$ it holds
$$
\left| \left\{x\colon \sli_{I\subset I_0, \; I\in D} \ep_I (\varphi, h_I)h_I(x) \geqslant \la \right\}\right| \leqslant 2\frac{\|\varphi\|_1}{\la}
$$
\end{corollary}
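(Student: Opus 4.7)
The plan is to deduce Corollary~\ref{cor1} as a purely algebraic consequence of the identity $\mathcal{B}_0(\la,f,F)=B_0(\la,f,F)$ from the main theorem. First I would reduce to the case $\supp\varphi\subset I_0=[0,1]$: each Haar coefficient $(\varphi,h_I)$ with $I\subset I_0$ sees only $\varphi|_{I_0}$, the level set is contained in $I_0$ for $\la>0$, and restricting $\varphi$ to $I_0$ can only decrease $\|\varphi\|_1$, so the full statement follows from this special case. Having done so, I would set $f:=\av{\varphi}{I_0}$ and $F:=\av{|\varphi|}{I_0}=\|\varphi\|_1$; then $(\la,f,F)\in\Omega_0$, and the main theorem yields
\[
\bigl|\{x\colon T_\ep\varphi(x)\geqslant\la\}\bigr|\leqslant \mathcal{B}_0(\la,f,F)=B_0(\la,f,F).
\]

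It then suffices to verify the pointwise bound $B_0(\la,f,F)\leqslant 2F/\la$ on $\Omega_0$ for every $\la>0$. I would split into two cases. If $\la\leqslant F$, then $B_0=1\leqslant 2F/\la$ because $\la\leqslant F\leqslant 2F$. If $\la\geqslant F$, clearing denominators in
\[
1-\frac{(\la-F)^2}{\la^2-f^2}\leqslant\frac{2F}{\la}
\]
reduces, after expanding and cancelling, to the equivalent inequality $f^2(2F-\la)\leqslant \la F^2$. When $\la\geqslant 2F$ the left-hand side is non-positive and we are done. When $F\leqslant\la<2F$, the admissibility $|f|\leqslant F$ gives $f^2(2F-\la)\leqslant F^2(2F-\la)$, and $F^2(2F-\la)\leqslant \la F^2$ is equivalent to $2F-\la\leqslant\la$, i.e.\ $\la\geqslant F$, which is our hypothesis.

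No step is really hard: once the sharp Bellman bound $B_0$ is in hand, the corollary is essentially a one-line algebraic check showing that $B_0$ automatically implies the weak $(1,1)$ inequality with constant $2$. The only mild technicality is the boundary locus $\la=F=|f|$, where the denominator $\la^2-f^2$ vanishes; but there the first branch of the piecewise definition of $B_0$ applies and the bound $1\leqslant 2F/\la=2$ is immediate. The main conceptual obstacle is really upstream of this argument, namely supplying the proof of $\mathcal{B}_0=B_0$; extracting Corollary~\ref{cor1} from it is then straightforward.
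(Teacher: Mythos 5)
Your proposal is correct and takes essentially the same route as the paper: both reduce the corollary to the pointwise bound $B_0(\la,f,F)\,\la/F\leqslant 2$ on $\Omega_0$ and then invoke the main theorem. The paper simply asserts $\sup\bigl(\mathcal{B}_0\cdot\la/F\bigr)=2$ as ``easy to verify''; you actually carry out the verification, splitting on $\la\lessgtr F$ and reducing the second case to $f^2(2F-\la)\leqslant\la F^2$, which is a correct and complete check, and you also address the harmless boundary case $\la=F=|f|$ where the second branch's denominator vanishes.
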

\begin{proof}
It is easy to verify that 
$$
\sup \left( \mathcal{B}_0(\la, f, F) \cdot \frac{\la}{F} \right) = 2.
$$
Thus, 
$$
\left| \left\{x\colon \sli_{I\subset I_0, \; I\in D} \ep_I (\varphi, h_I)h_I(x) \geqslant \la \right\}\right| \leqslant 2 \frac{F}{\la} = 2\frac{\|\varphi\|_1}{\la}.
$$
\end{proof}
\begin{corollary}
For any function $\varphi\in L^1$, any number $\la\geqslant 0$ and any family $\{\ep_I\}$ with $|\ep_I|=1$ it holds
$$
\left| \left\{x\colon \sli_{I\subset I_0, \; I\in D} \ep_I (\varphi, h_I)h_I(x) \geqslant \la \right\}\right| \leqslant 4\frac{\|\varphi\|_1}{\la}
$$
\end{corollary}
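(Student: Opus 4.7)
The statement as written is literally weaker than Corollary~\ref{cor1} (since $2 \leq 4$), so in the strictest reading the proof is immediate. The substantive reading, and almost certainly the intended one, is the two-sided bound
\begin{equation*}
\bigl|\{x: |T_\epsilon\varphi(x)| \geqslant \lambda\}\bigr| \leqslant 4\,\frac{\|\varphi\|_1}{\lambda},
\end{equation*}
where $T_\epsilon\varphi := \sum_{I \subset I_0,\, I \in D} \epsilon_I (\varphi, h_I) h_I$. Under this interpretation the factor $2$ in Corollary~\ref{cor1} is doubled precisely because we must also control the set where $T_\epsilon\varphi$ is large and negative.

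The plan is a one-line symmetrization. Decompose
\begin{equation*}
\{x: |T_\epsilon\varphi(x)| \geqslant \lambda\} = \{x: T_\epsilon\varphi(x) \geqslant \lambda\} \;\cup\; \{x: -T_\epsilon\varphi(x) \geqslant \lambda\}.
\end{equation*}
For the first set, Corollary~\ref{cor1} applied to $\varphi$ and the sign sequence $\{\epsilon_I\}$ yields the bound $2\|\varphi\|_1/\lambda$. For the second set, observe that $-T_\epsilon\varphi = T_{-\epsilon}\varphi$, where the sign sequence $\{-\epsilon_I\}$ again satisfies $|-\epsilon_I| = 1$, so Corollary~\ref{cor1} applies verbatim and gives the same bound $2\|\varphi\|_1/\lambda$. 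A union bound combines the two estimates into $4\|\varphi\|_1/\lambda$.

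There is no real obstacle here: all the Bellman-function content is already absorbed into Corollary~\ref{cor1}, and the step from a one-sided distributional estimate to a two-sided one is the textbook symmetry trick of replacing $\epsilon_I$ by $-\epsilon_I$. The only thing to verify is the identity $-T_\epsilon\varphi = T_{-\epsilon}\varphi$, which is immediate from the linearity of the expansion in Haar coefficients.
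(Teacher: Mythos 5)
Your proof is correct and matches the paper's argument: the paper replaces $\varphi$ by $-\varphi$ where you flip the signs $\ep_I \mapsto -\ep_I$, and these are the same step since $T_{-\ep}\varphi = T_{\ep}(-\varphi) = -T_{\ep}\varphi$. You have also correctly diagnosed the typographical slip: both the corollary statement and the first display of the paper's proof should read $\bigl|\sum_{I\subset I_0,\,I\in D} \ep_I(\varphi, h_I)h_I(x)\bigr| \geqslant \la$ with the absolute value around the entire sum, in which case the paper's equality (where you use a union-bound $\leqslant$) follows from the disjointness of $\{T_\ep\varphi \geqslant \la\}$ and $\{T_\ep\varphi \leqslant -\la\}$ for $\la>0$.
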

\begin{proof}
\begin{multline}\label{multtt}
\left| \left\{x\colon \sli_{I\subset I_0, \; I\in D} |\ep_I (\varphi, h_I)h_I(x)| \geqslant \la \right\}\right| = 
\left| \left\{x\colon \sli_{I\subset I_0, \; I\in D} \ep_I (\varphi, h_I)h_I(x) \geqslant \la \right\}\right| + \\
\left| \left\{x\colon \sli_{I\subset I_0, \; I\in D} \ep_I (-\varphi, h_I)h_I(x) \geqslant \la \right\}\right|  \leqslant 4\frac{\|\varphi\|_1}{\la}
\end{multline}
$$
$$
\end{proof}

We start to prove our main theorem.
\section{$B\geqslant \mathcal{B}$}
We need a technical lemma.
\begin{lemma}
Let $x^\pm$ be two points in $\Omega$ such tat
$|f^+-f^-|=|g^+-g^-|$ and $x=\frac12(x^++x^-)$. Then
\begin{equation}
\label{mi} B(x)-\frac{B(x^+)+B(x^-)}2\ge 0\,.
\end{equation}
\end{lemma}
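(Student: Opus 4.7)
The plan is to prove the midpoint concavity \eqref{mi} by case analysis based on whether each of $x^\pm$ and $x$ lies in $\Omega_1 := \Omega \cap \{-g \leq F\}$ (where $B \equiv 1$) or in $\Omega_2 := \Omega \cap \{-g \geq F\}$ (where $B$ equals $\phi := 1 - (g+F)^2/(g^2-f^2)$). A preliminary observation is that $B \leq 1$ on all of $\Omega$: in $\Omega_2$ the chain $-g \geq F \geq |f|$ forces $g^2 \geq F^2 \geq f^2$, so the fraction being subtracted is nonnegative. Since $\Omega_1$ and $\Omega_2$ are each convex, whenever both $x^\pm$ lie in $\Omega_1$ one has $x \in \Omega_1$ and the inequality is trivial; likewise, whenever $x \in \Omega_1$ the inequality follows immediately from $B(x) = 1 \geq \tfrac12(B(x^+) + B(x^-))$.

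The core computation is the case $x, x^\pm \in \Omega_2$. I will change to coordinates $a = g+f$, $b = g-f$, $c = g+F$, under which $\Omega_2$ becomes $\{a \leq 0,\ b \leq 0\}$, the function $\phi$ becomes $1 - c^2/(ab)$, and the constraint $|\Delta f| = |\Delta g|$ becomes $\Delta a \cdot \Delta b = 0$. A direct computation of the second directional derivative of $\psi := c^2/(ab)$ along a direction with $\Delta b = 0$ produces the manifestly nonnegative expression $\tfrac{2}{ab}\bigl(\Delta c - \tfrac{c}{a}\Delta a\bigr)^2$, and symmetrically when $\Delta a = 0$. Hence $\psi$ is convex and $\phi$ concave along admissible directions, so \eqref{mi} holds in this case.

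The remaining mixed case is $x^+ \in \Omega_1$, $x^- \in \Omega_2$, $x \in \Omega_2$ (the opposite sign assignment is analogous). Setting $u := g + F$, one has $u^- \leq 0 \leq u^+$ while $u = \tfrac12(u^+ + u^-) \leq 0$, so the segment $[x^-, x^+]$ meets the boundary $\{u = 0\}$ at a unique point $y = (1-s)x^- + s x^+$ with $s \in [\tfrac12, 1]$. I then reflect: set $z := 2x - y = s x^- + (1-s) x^+$, which lies in $[x^-, x] \subset \Omega_2$. Since $z - y$ is parallel to $x^+ - x^-$, the pair $(z, y)$ still satisfies $|\Delta f| = |\Delta g|$ and has midpoint $x$; applying the $\Omega_2$ concavity step to $(z, y)$ gives $\phi(x) \geq \tfrac12(\phi(z) + \phi(y)) = \tfrac12(\phi(z) + 1)$. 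Separately, the restriction of $\phi$ to $[x^-, y] \subset \overline{\Omega_2}$ is concave with maximum value $1$ attained at the endpoint $y$, hence nondecreasing on this subsegment; since $z$ lies on it, $\phi(z) \geq \phi(x^-)$. Chaining these bounds gives $B(x) = \phi(x) \geq \tfrac12(1 + \phi(x^-)) = \tfrac12(B(x^+) + B(x^-))$.

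The main obstacle will be this mixed case. Directly extending $\phi$ across the boundary $\{u = 0\}$ and invoking a global concavity statement fails, because $\phi(x^+) \leq 1 = B(x^+)$ would push the inequality in the wrong direction. The reflection succeeds precisely because $\phi = 1$ on $\{u = 0\}$, so swapping $x^+$ for the boundary point $y$ trades $B(x^+) = 1$ for $\phi(y) = 1$ losslessly, while monotonicity of $\phi$ on $[x^-, y]$ ensures $\phi(z) \geq \phi(x^-)$ at the reflected endpoint.
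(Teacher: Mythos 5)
The paper actually gives no proof of this lemma: it is announced as ``a technical lemma'' and used immediately afterwards, and the only proved concavity statement (the lemma labelled \texttt{tuda} in Section~5) concerns the Bellman function $\mathcal{B}$ defined as a supremum, not the explicit formula $B$. The brief remark in Section~4 that ``$M$ is concave when $y_1$ or $y_2$ is fixed \dots{} is proved in the Theorem~\ref{tuda}'' is therefore a genuine gap in the paper, since $M$ there is defined from $B$, not from $\mathcal{B}$. Your argument closes that gap, and as far as I can check it is correct.

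The three ingredients all work. First, the observation $B\le 1$ on $\Omega$ is valid: in $\Omega_2$ one has $-g\ge F\ge |f|$, hence $g^2\ge F^2\ge f^2$ and the subtracted fraction is nonnegative (the only place $g^2-f^2$ can vanish inside $\bar{\Omega}_2$ is the corner $-g=F=|f|$, where $B=1$ by continuity from $\Omega_1$). Second, the coordinate change $a=g+f$, $b=g-f$, $c=g+F$ turns the two admissible directions $\Delta g=\pm\Delta f$ into $\Delta b=0$ and $\Delta a=0$, and the Hessian of $\psi=c^2/(ab)$ in the $(a,c)$-plane gives the quadratic form
$\frac{2}{ab}\bigl(\Delta c-\tfrac{c}{a}\,\Delta a\bigr)^2\ge 0$,
which is nonnegative because $ab=g^2-f^2>0$ in the interior of $\Omega_2$; so $\phi=1-\psi$ is concave along admissible directions. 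Third, the reflection trick in the mixed case is the right thing: writing the segment as $\gamma(\tau)=(1-\tau)x^-+\tau x^+$, your $y=\gamma(s)$ and $z=2x-y=\gamma(1-s)$ with $s\in[\tfrac12,1]$, so $z\in[x^-,x]\subset[x^-,y]\subset\bar{\Omega}_2$; the pair $(z,y)$ has midpoint $x$ and admissible displacement, giving $\phi(x)\ge\tfrac12(\phi(z)+1)$; and a concave function on $[x^-,y]$ with its maximum value $1$ attained at the endpoint $y$ is indeed nondecreasing towards $y$, so $\phi(z)\ge\phi(x^-)$. Chaining these, $B(x)\ge\tfrac12\bigl(B(x^+)+B(x^-)\bigr)$.

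One small point worth making explicit in a final write-up: the pointwise Hessian computation gives concavity in the open region $ab>0$, and one passes to the closed segment (in particular to $y$ on $\{g+F=0\}$, or to a possible corner endpoint) by continuity of $B$ on $\bar{\Omega}_2$, which holds because $B$ is defined to equal $1$ where $\phi$ would read $0/0$. With that noted, your proof is complete and supplies an argument the paper itself omitted.
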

Given the lemma, we prove the following theorem.
\begin{theorem}
For any point $x\in \Om$ it holds $B(x)\geqslant \mathcal{B}(x)$.
\end{theorem}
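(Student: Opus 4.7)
The plan is the standard Bellman induction along the dyadic tree: the lemma plays the role of a one-step ``concavity'' inequality, iterated across generations, and the formula for $B$ supplies a direct terminal estimate. Fix any admissible competitor $(\varphi,\psi)$, i.e.\ $\av{\varphi}{I_0}=f$, $\av{|\varphi|}{I_0}=F$, and $\psi=\av{\psi}{I_0}+\sli_{I\in D}\ep_I(\varphi,h_I)h_I$ with $\av{\psi}{I_0}=g$. For each $I\in D$ attach the point
$$
x_I:=\bigl(\av{\psi}{I},\,\av{\varphi}{I},\,\av{|\varphi|}{I}\bigr)\in\Om.
$$
Two elementary facts about the children $I_\pm$ of $I$ are needed: (i) $x_I=\tfrac12(x_{I_+}+x_{I_-})$, by averaging; (ii) $|f_{I_+}-f_{I_-}|=|g_{I_+}-g_{I_-}|$, because the jumps $\av{\varphi}{I_+}-\av{\varphi}{I_-}$ and $\av{\psi}{I_+}-\av{\psi}{I_-}$ differ only by the factor $\ep_I$ of absolute value one.

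Condition (ii) is exactly what the lemma requires, so $|I|\,B(x_I)\ge|I_+|B(x_{I_+})+|I_-|B(x_{I_-})$ at every node. Iterating over $n$ generations yields
$$
B(g,f,F)=B(x_{I_0})\;\ge\;\sli_{|I|=2^{-n}}|I|\,B(x_I).
$$
The terminal estimate $B\ge\1_{\{g\ge 0\}}$ on $\Om$ is then read off the formula: when $-g\le F$ one has $B=1$; when $-g>F$, the bound $B\ge 0$ rearranges to $2gF+F^2+f^2\le 0$, which follows from $g<-F\le 0$ together with $F\ge|f|$.

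Restricting first to $\varphi$ constant on all intervals of size $2^{-n}$, the Haar expansion of $\psi$ is finite, $\psi$ is constant on the same intervals, and $\psi|_I\equiv g_I$ whenever $|I|=2^{-n}$. Combining the two previous displays gives
$$
|\{x\colon\psi(x)\ge 0\}|=\sli_{|I|=2^{-n}}|I|\,\1_{\{g_I\ge 0\}}\;\le\;\sli_{|I|=2^{-n}}|I|\,B(x_I)\;\le\;B(g,f,F).
$$
A general admissible pair $(\varphi,\psi)$ is handled by a routine approximation: replace $\varphi,\psi$ by $\E_n\varphi,\E_n\psi$ (still a martingale-transform pair, at level $n$); the previous step applies with $F$ replaced by $F_n:=\av{|\E_n\varphi|}{I_0}\le F$, and the monotonicity of $B$ in its last variable (visible from the formula) absorbs this loss. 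Passing to $n\to\infty$ by martingale convergence and taking the supremum over all admissible $(\varphi,\psi)$ produces $\mathcal{B}(g,f,F)\le B(g,f,F)$. The main obstacle is the terminal estimate together with this last limit passage (the set $\{\psi=0\}$, if it has positive measure, is neutralized by a tiny perturbation of $g$); the dyadic induction itself is automatic once the lemma is in hand.
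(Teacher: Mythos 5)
Your proposal is correct and follows the same dyadic Bellman--induction skeleton as the paper: attach $x_I$ to each node, observe (i) and (ii), and iterate the one-step concavity down $n$ generations. The difference is in the finishing step. The paper interprets $\sum_{|I|=2^{-n}}|I|\,B(x_I)$ as $\int_0^1 B(x^{(n)}(t))\,dt$, lets $n\to\infty$ directly inside the integral using a.e.\ convergence of $x^{(n)}(t)\to(\psi(t),\varphi(t),|\varphi(t)|)$ at Lebesgue points together with continuity and boundedness of $B$, and then applies the boundary identity $B(g,f,|f|)=1$ for $g\ge 0$ (plus $B\ge 0$) to get $\int B(\psi,\varphi,|\varphi|)\ge|\{\psi\ge 0\}|$. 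You instead first establish the pointwise terminal bound $B\ge\1_{\{g\ge 0\}}$ on $\Omega$, prove the inequality for level-$n$ step functions where $\psi|_I\equiv g_I$, and then approximate a general admissible pair by $(\E_n\varphi,\E_n\psi)$, invoking monotonicity of $B$ in $F$ (valid, since $\partial_F B=-2(g+F)/(g^2-f^2)\ge 0$ on $\{-g\ge F\}$), Fatou, and a small perturbation $g\mapsto g+\epsilon$ to neutralize $\{\psi=0\}$. Both routes are sound; the paper's limit-inside-the-integral argument is more economical, while yours makes the discrete terminal estimate explicit at the price of the approximation/perturbation bookkeeping. One small caution on your side: the perturbation step tacitly uses continuity of $B$ in $g$ and the fact that shifting $\psi$ by a constant preserves admissibility with $g$ replaced by $g+\epsilon$; both hold, but are worth stating.
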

\begin{proof}
Let us fix a point $x\in\Omega$ and a pair of admissible functions $\f$, $\psi$
on $I_0$ corresponding to $x$. For any $I\in D$ by the symbol $x^{I}$ we denote
the point $(\av{\psi}I, \av{\varphi}I, \av{|\varphi|}I,)$. 
We notice that since $\psi$ is a martingale transform of $\varphi$, we always have 
$$
|f^{I^+}-f^{I^-}|=|g^{I^+}-g^{I^-}|, 
$$
and 
$$
x^{I}=\frac{x^{I^+}+x^{I^-}}2.
$$
Using consequently main
inequality for the function $B$ we can write down the following chain of
inequalities
$$
B(x)\ge\frac12\big(B(x^{I_0^+})+B(x^{I_0^-})\big)\ge\sum_{I\in
D,\,|I|=2^{-n}}\frac1{|I|}B(x^{I})=\int_0^1B(x^{(n)}(t))dt\,,
$$
where $x^{(n)}(t)=x^I$, if $t\in I$, $|I|=2^{-n}$.

Note that $x^{(n)}(t)\to(\psi(t), \varphi(t),|\varphi(t)|)$ almost everywhere (at any
Lebesgue point $t$), and therefore, since $B$ is continuous and bounded, we can
pass to the limit in the integral. So, we come to the inequality
\begin{equation}
\label{upest}
B(x)\ge\int_0^1B(\psi(t), \varphi(t),|\varphi(t)|)dt\ge\int_{\{t\colon\psi(t)\ge0\}}
=\big|\{t\in I_0\colon\psi(t)\ge0\}\big|\,
\end{equation}
where we have used the property $B(g,f,|f|)=1$ for $g\ge0$. Now, taking
supremum in~\eqref{upest} over all admissible pairs $\f$, $\psi$, we get the
required estimate $B(x)\ge\mathcal{B}(x)$.
\end{proof}
\section{$B(g, f, F)\leqslant \mathcal{B}(g, f, F)$}
This section is devoted to the following theorem.
\begin{theorem}
For any point $x\in \Om$ it holds $B(x)\leqslant \mathcal{B}(x)$.
\end{theorem}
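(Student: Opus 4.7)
The plan is to construct, for each $x=(g,f,F)\in\Omega$, a fractal sequence of admissible pairs $(\varphi_n,\psi_n)$ whose $\bigl|\{t\in I_0:\psi_n(t)\ge 0\}\bigr|$ converges to $B(x)$ from below. The construction is built level by level on a dyadic tree: to each dyadic interval $I\subset I_0$ we assign a point $x^I\in\Omega$ with $x^I=\tfrac12(x^{I^+}+x^{I^-})$ and $|f^{I^+}-f^{I^-}|=|g^{I^+}-g^{I^-}|$, the direction of the split being chosen so that the main inequality of the Lemma holds with \emph{equality}. The piecewise-constant approximants $\varphi_n(t):=f^I$ and $\psi_n(t):=g^I$ for $t\in I$ at level $n$ then converge in $L^1$ to an admissible pair, with $\psi$ a martingale transform of $\varphi$, the martingale-transform identity being automatic from $|f^{I^+}-f^{I^-}|=|g^{I^+}-g^{I^-}|$ at every node.

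The heart of the proof is identifying the extremal splitting direction. In the hard region $\{-g>F\}$, where $B(x)=1-(g+F)^2/(g^2-f^2)=:1-\lambda^2$, imposing $B(x^+)+B(x^-)=2B(x)$ on a symmetric perturbation with $|\delta f|=|\delta g|$ reduces, after a short calculation in the variables $u=g+F$, $v_\pm=g\pm f$, to a single linear identity that fixes the ratio $\delta F/\delta g$. We then rescale so that one of the children lands on a boundary where $B$ is known exactly: for $\lambda^2\le 1/2$ on the boundary $\{-g=F\}$, giving $B(x^-)=1$ and $B(x^+)=2B(x)-1$ (hence the doubling $\lambda_n^2=2^n\lambda^2$ upon iteration); for $\lambda^2\ge 1/2$ on the boundary $\{B=0\}$, giving $B(x^+)=0$ and $B(x^-)=2B(x)$. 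Iterating, the bad branches keep halving in measure while approaching boundary points where $B$ is handled directly, and the resulting telescoping identity delivers cumulative good-branch measure equal to $B(x)$ in the limit. In the easy region $\{-g\le F\}$, where $B=1$, one explicit Haar split with $\varphi$ constant on each half of $I_0$ produces one child with $\psi\ge 0$ on its half, and another child to which the hard-region construction (or the easy-region argument recursively) applies.

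The principal obstacle is admissibility bookkeeping: the ratio $\delta F/\delta g$ fixed by the extremal calculation must place both children in $\Omega$, i.e., $F^\pm\ge|f^\pm|$, which requires a small case analysis on the signs of $f$ and $g+F$. Once admissibility is checked, the telescoping fractal delivers $\mathcal{B}(x)\ge B(x)$ and simultaneously yields the extremal sequences promised in the abstract.
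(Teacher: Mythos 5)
Your overall strategy---split along the Monge--Amp\`ere characteristics so that the main inequality holds with equality, land one child on a boundary, and telescope---is the right idea and is what the paper does. The specific ``one-shot to a boundary'' version, with the doubling of $\lambda^2:=(g+F)^2/(g^2-f^2)$ and the observation that the good measure accumulates as the binary-digit sum $1-\lambda^2$, is a cleaner finite-step variant of the paper's infinitesimal iteration (step size $\delta=2^{-r}$, $\delta\to 0$) and the arithmetic does telescope correctly. However, there are three genuine gaps that the paper explicitly has to work around.

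First, the boundary $\{-g=F\}$ is not a place where ``$B$ is handled directly.'' There $B=1$ but $\langle\psi\rangle=g=-F<0$, so an extremizer must have $\psi\ge 0$ almost everywhere while its average is strictly negative; any such pair is itself an infinite construction, not a constant function. Your proposal dispatches this case by recursing into the ``easy-region argument,'' which in turn calls back into the hard-region construction, and you do not show this mutual recursion terminates or produces convergent approximants. The paper avoids the circularity with a separate quantitative estimate (the case $F\ge 2$ in the normalized variables), showing $\M(1,1,F)\ge 1-3\sigma$ for every $\sigma>0$ by a geometric accumulation argument that does not presuppose the hard-region extremizer.

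Second, ``the piecewise-constant approximants converge in $L^1$ to an admissible pair'' is asserted, not proved, and it is the hardest part. You need the Haar martingale $\varphi_n$ to converge, the limit to be in $L^1$, the Haar coefficients of the limit to agree with the prescribed ones at every node (so that the martingale-transform identity survives the limit), and, crucially, $\langle|\varphi|\rangle_{I_0}$ of the limit to equal the prescribed $F$ even though $\langle|\varphi_n|\rangle_{I_0}=\sum_{|I|=2^{-n}}|I|\,|f^I|\ne F$ at every finite level. The paper's Lemma~\ref{constrrr} resolves exactly this by constructing the pair as a fixed point of a self-similar map and verifying an $L^2$ contraction estimate $\|\varphi^{n+1}-\varphi^n\|_2^2\le \tfrac{1-\delta}{2}\|\varphi^n-\varphi^{n-1}\|_2^2$. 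Without some such estimate the ``limit'' may fail to exist or fail to be admissible.

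Third, the admissibility check that both children stay in $\Omega$ (and that both stay on the correct side of $\{-g=F\}$ so the chosen branch of the formula for $B$ is in force) is more than bookkeeping when the steps are finite rather than infinitesimal: you must verify that the characteristic segment from $x$ to the chosen boundary point does not exit $\Omega$ before reaching it, for every point reachable by the iteration. You flag this but do not do it; since this determines whether the whole telescoping chain is well-defined, it needs to be carried out.
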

To prove the theorem we need to present two sequences of functions $\{\varphi_n\}$, $\{\psi_n\}$, such that
\begin{itemize}
\item For every $n$ the function $\psi_n$ is a martingale transform of $\varphi_n$; 
\item For every $n$: $\av{|\varphi_n|}{I_0}=F$, $\av{\varphi_n}{I_0}=f$, $\av{\psi_n}{I_0}=g$;
\item It holds that $B(g, f, F)=\lim\limits_{n\to \infty} |\{x\colon \psi_n(x)\geqslant 0\}|$ 	.
\end{itemize}
We need the following definition.
\begin{defin}
We call a pair $(\varphi, \psi)$ admissible for the point $(g, f, F)$ if $\psi$ is a martingale transform of $\varphi$, and $\av{|\varphi|}{I_0}=F$, $\av{\varphi}{I_0}=f$, $\av{\psi}{I_0}=g$.
\end{defin}
\begin{defin}
We call a pair $(\varphi, \psi)$ an $\ep$-extremizer for a point $(g, f, F)$, if this pair is admissible for this point and $|\{x\colon \psi(x)\geqslant 0\}| \geqslant B(g, f, F)-\ep$.
\end{defin}
The following lemma is almost obvious.
\begin{lemma}\label{simple}
\begin{enumerate}
\item For a positive number $s$: $B(sg, sf, sF)=B(g,f,F)$. Moreover, if a pair $(\varphi, \psi)$ is admissible for a point $(g,f,F)$ then $(s\varphi, s\psi)$ is admissible for $(sg, sf, sF)$. If a pair $(\varphi, \psi)$ is an $\ep$-extremizer for a point $(g,f,F)$ then $(s\varphi, s\psi)$ is an $\ep$-extremizer for $(sg, sf, sF)$.
\item $B(g,f,F)=B(g, -f, F)$. Moreover, if a pair $(\varphi, \psi)$ is admissible for a point $(g,f,F)$ then $(-\varphi, \psi)$ is admissible for $(g, -f, F)$. If a pair $(\varphi, \psi)$ is an $\ep$-extremizer for a point $(g,f,F)$ then $(-\varphi, \psi)$ is an $\ep$-extremizer for $(g, -f, F)$.
\end{enumerate}
\end{lemma}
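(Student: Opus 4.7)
The plan is to verify each of the four sub-statements by direct inspection: all of them reduce to the explicit formula for $B$ and the linearity of the Haar expansion.

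For part (1), I would first observe from the definition of $B$ that the formula is homogeneous of degree zero: the condition $-sg\leqslant sF$ is equivalent to $-g\leqslant F$, and in the nontrivial branch $(sg+sF)^2/\bigl((sg)^2-(sf)^2\bigr)=(g+F)^2/(g^2-f^2)$. For the admissibility claim, if $\psi=\av{\psi}{I_0}+\sli_{I}\ep_I(\f,h_I)h_I$ with $|\ep_I|=1$, then $s\psi=\av{s\psi}{I_0}+\sli_I\ep_I(s\f,h_I)h_I$ with the same sign family, and the three averages scale as demanded. Finally $\{x:s\psi(x)\geqslant 0\}=\{x:\psi(x)\geqslant 0\}$ because $s>0$, so the measure entering the definition of an $\ep$-extremizer is unchanged; combined with the invariance of $B$, this transfers the extremizer property.

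For part (2), the formula for $B(g,f,F)$ in the nontrivial branch depends on $f$ only through $f^2$, and the trivial branch does not depend on $f$ at all, so $B(g,f,F)=B(g,-f,F)$ is immediate. To re-express $\psi$ as a martingale transform of $-\f$ I would simply flip all the signs:
$$
\psi=\av{\psi}{I_0}+\sli_{I\subset I_0,\,I\in D}\ep_I(\f,h_I)h_I(x)=\av{\psi}{I_0}+\sli_{I\subset I_0,\,I\in D}(-\ep_I)\bigl((-\f),h_I\bigr)h_I(x),
$$
giving a new sign family $\wt\ep_I:=-\ep_I$ with $|\wt\ep_I|=1$. The averages transform correctly, namely $\av{-\f}{I_0}=-f$, $\av{|-\f|}{I_0}=F$, and $\av{\psi}{I_0}=g$, while the distinguished set $\{x:\psi(x)\geqslant 0\}$ is unchanged. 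Together with the equality of $B$-values, the $\ep$-extremizer property follows.

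Neither part presents a genuine obstacle; the content is entirely bookkeeping, as the author's phrase ``almost obvious'' signals. The one place requiring brief attention is in part (2), where preservation of the martingale-transform structure requires replacing $\ep_I$ by $-\ep_I$ rather than reusing the same sign family — an adjustment permitted precisely because the definition of a martingale transform constrains only the modulus $|\ep_I|=1$.
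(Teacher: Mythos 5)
Your verification is correct and is exactly the direct bookkeeping the paper has in mind (it offers no proof, calling the lemma ``almost obvious''). The one genuinely non-trivial observation — that in part (2) one must pass to the flipped sign family $\wt\ep_I=-\ep_I$ to keep $\psi$ a martingale transform of $-\f$ — is the right point to flag, and you handle it correctly.
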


The next lemma is a key to our ``splitting'' technique.
\begin{lemma}\label{gluezero}
Suppose two pairs $(\varphi_\pm, \psi_\pm)$ are admissible for points $(g^\pm, f^\pm, F^\pm)$ correspondingly. Suppose also that 
$$
F=\frac{F^+ + F^-}2, \; \; \; f=\frac{f^+ + f^-}2,\;\;\;, g=\frac{g^+ + g^-}2,\;\;\;|f^+-f^-| = |g^+-g^-|.
$$
Then a pair $(\varphi, \psi)$ is admissible for the point $(g,f,F)$, where
$$
\varphi(x) = \begin{cases} \varphi_-(2x), &x\in [0, \frac12) \\
\varphi_+(2x-1), &x\in [\frac12, 1], \end{cases}
\;\;\;\;
\psi(x) = \begin{cases} \psi_-(2x), &x\in [0, \frac12) \\
\psi_+(2x-1), &x\in [\frac12, 1].\end{cases}
$$
\end{lemma}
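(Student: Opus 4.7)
The plan is to verify each of the three conditions in the definition of admissibility: the mean conditions $\av{|\varphi|}{I_0} = F$, $\av{\varphi}{I_0} = f$, $\av{\psi}{I_0} = g$, and the martingale transform relation between $\varphi$ and $\psi$. The mean conditions are routine bookkeeping: after the affine rescalings $x \mapsto 2x$ on $I_0^-$ and $x \mapsto 2x-1$ on $I_0^+$, a change of variables shows that the averages of $\varphi$, $|\varphi|$, and $\psi$ on the two halves equal $f^\pm$, $F^\pm$, and $g^\pm$ respectively, so averaging over $I_0$ recovers $f$, $F$, $g$ via the arithmetic-mean identities stated in the hypothesis.

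For the martingale transform relation I would construct the required sign family $\{\ep_I\}_{I\subset I_0,\,I\in D}$ one interval at a time. Every proper subinterval $I \subsetneq I_0$ lies in exactly one half $I_0^\pm$ and corresponds under the relevant affine bijection to a dyadic interval $J \subset [0,1]$ with $|J| = 2|I|$. A short change-of-variables computation gives $h_I(x) = \sqrt{2}\, h_J(2x-1)$ on $I_0^+$ (and the symmetric formula on $I_0^-$), so
\[ (\varphi, h_I) = \tfrac{1}{\sqrt{2}}(\varphi_\pm, h_J), \qquad (\psi, h_I) = \tfrac{1}{\sqrt{2}}(\psi_\pm, h_J). \]
Letting $\{\ep_J^\pm\}$ be the sign families witnessing that $\psi_\pm$ is a martingale transform of $\varphi_\pm$, setting $\ep_I := \ep_J^\pm$ then yields $(\psi, h_I) = \ep_I(\varphi, h_I)$ with $|\ep_I|=1$ for every proper subinterval.

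The only substantive step, and the one I expect to carry the entire weight of the hypothesis, is assigning the top-level sign $\ep_{I_0}$. A direct computation gives $(\varphi, h_{I_0}) = (f^+ - f^-)/2$ and $(\psi, h_{I_0}) = (g^+ - g^-)/2$, so the assumption $|f^+ - f^-| = |g^+ - g^-|$ is \emph{precisely} what allows a choice $\ep_{I_0} \in \{+1,-1\}$ satisfying $(\psi, h_{I_0}) = \ep_{I_0}(\varphi, h_{I_0})$ (in the degenerate case $f^+ = f^-$ the sign is arbitrary). Once $\{\ep_I\}$ is assembled across all dyadic levels, the Haar expansion of $\psi$ on $I_0$ matches the formula in the definition of a martingale transform of $\varphi$, and admissibility of $(\varphi, \psi)$ for $(g, f, F)$ follows.
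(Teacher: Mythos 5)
Your proposal is correct and takes essentially the same approach as the paper: check the averages by change of variables, reduce the martingale-transform relation to verifying $|(\psi,h_I)|=|(\varphi,h_I)|$ on each dyadic $I$, inherit it from admissibility of $(\varphi_\pm,\psi_\pm)$ for proper $I\subsetneq I_0$, and use the hypothesis $|f^+-f^-|=|g^+-g^-|$ exactly once to fix $\ep_{I_0}$. The only cosmetic difference is that you make the sign family and the $1/\sqrt{2}$ rescaling explicit, whereas the paper states the reduction more tersely (and drops an immaterial factor of $1/2$ in $(\varphi,h_{I_0})$, which your computation has right).
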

\begin{proof}
It is clear that $\av{\varphi}{I_0} = f$, $\av{\psi}{I_0} = g$, and $\av{|\varphi|}{I_0} = F$. All we need to prove is that for any interval $I$ it is true that
$$
|(\psi, h_I)| = |(\varphi, h_I)|.
$$
For any interval $I\not= I_0$ it is obvious, since pairs $(\varphi^\pm, \psi_\pm)$ are admissible for corresponding points. Thus, we need to show that
$$
|(\varphi, h_{I_0})|=|(\psi, h_{I_0})|.
$$
But
$$
(\varphi, h_{I_0}) = \av{\varphi}{[\frac12,1]} - \av{\varphi}{[0, \frac12]} = \av{\varphi_+}{[0,1]}-\av{\varphi_-}{[0,1]} = f^+-f^-,
$$
$$
(\psi, h_{I_0}) = \av{\psi}{[\frac12,1]} - \av{\psi}{[0, \frac12]} = \av{\psi_+}{[0,1]}-\av{\psi_-}{[0,1]} = g^+-g^-,
$$
which finishes our proof.
\end{proof}
We generalize this lemma a little.
\begin{lemma}\label{glue}
Suppose two pairs $(\varphi_\pm, \psi_\pm)$ are admissible for points $(g^\pm, f^\pm, F^\pm)$ correspondingly. Suppose also that 
$$
F=\frac{F^+ + F^-}2, \; \; \; f=\frac{f^+ + f^-}2,\;\;\;, g=\frac{g^+ + g^-}2,\;\;\;|f^+-f^-| = |g^+-g^-|.
$$
Suppose $I$ is a dyadic interval with ``sons'' $I_\pm$. Suppose that a pair $(\Phi, \Psi)$ is admissible for some point $(g^0, f^0, F^0)$. Suppose that 
$$
\forall \; x\in I \; \; \Phi(x)=\varphi^I(x), \; \; \; \Psi(x)=\psi^I(x),
$$
where the pair $(\varphi, \psi)$ is admissible for the point $(g,f,F)$. Then the pair $(\Phi_1, \Psi_1)$, defined below, is admissible for the point $(g^0, f^0, F^0)$:
$$
\Phi_1(x) = \begin{cases} \Phi(x), &x\not\in I \\
													\varphi_+^{I_+}(x), &x\in I_+\\
													\varphi_-^{I_-}(x), &x\in I_-
						\end{cases}, \;\;\;\;
						\Psi_1(x) = \begin{cases} \Psi(x), &x\not\in I \\
													\psi_+^{I_+}(x), &x\in I_+\\
													\psi_-^{I_-}(x), &x\in I_-
						\end{cases}
$$
\end{lemma}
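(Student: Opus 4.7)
My plan is to verify the two defining requirements of admissibility for $(\Phi_1, \Psi_1)$ on $I_0$: that it has the prescribed averages $(g^0, f^0, F^0)$, and that $\Psi_1$ is a martingale transform of $\Phi_1$, i.e.\ $|(\Phi_1, h_J)| = |(\Psi_1, h_J)|$ for every dyadic $J \subseteq I_0$. Since the surgery affects only values on $I$, both tasks localize.

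First I would observe that by admissibility of $(\varphi_\pm, \psi_\pm)$ one has
\[
\av{\varphi_\pm^{I_\pm}}{I_\pm} = f^\pm, \qquad \av{|\varphi_\pm^{I_\pm}|}{I_\pm} = F^\pm, \qquad \av{\psi_\pm^{I_\pm}}{I_\pm} = g^\pm.
\]
Averaging these over $I = I_+\cup I_-$ and invoking the midpoint relations $F = (F^++F^-)/2$, $f = (f^++f^-)/2$, $g = (g^++g^-)/2$ gives $\av{\Phi_1}{I}=f$, $\av{|\Phi_1|}{I}=F$, $\av{\Psi_1}{I}=g$. But these coincide with the averages that $(\Phi,\Psi) = (\varphi^I,\psi^I)$ already had on $I$, since $(\varphi,\psi)$ was admissible for $(g,f,F)$. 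Hence, for any dyadic interval that either contains $I$ or is disjoint from it, the averages of $\Phi_1, \Psi_1$ match those of $\Phi, \Psi$; in particular $\av{\Phi_1}{I_0}=f^0$, $\av{|\Phi_1|}{I_0}=F^0$, $\av{\Psi_1}{I_0}=g^0$.

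For the Haar condition, I would split dyadic $J\subseteq I_0$ into cases. If $J\cap I = \emptyset$ or $I\subseteq J$, then $h_J$ is constant on each half of $J$ and the preserved averages above force $(\Phi_1,h_J) = (\Phi,h_J)$ and $(\Psi_1,h_J) = (\Psi,h_J)$; admissibility of $(\Phi,\Psi)$ then handles these $J$. If $J\subsetneq I_+$ (or symmetrically $J\subsetneq I_-$), the coefficient is, up to the standard rescaling, a Haar coefficient of $\varphi_+$ (respectively $\psi_+$) on $I_0$, so admissibility of $(\varphi_+, \psi_+)$ gives $|(\Phi_1, h_J)| = |(\Psi_1, h_J)|$. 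The only remaining case is $J=I$, where a direct computation yields
\[
(\Phi_1, h_I) = \frac{\sqrt{|I|}}{2}(f^+ - f^-), \qquad (\Psi_1, h_I) = \frac{\sqrt{|I|}}{2}(g^+ - g^-),
\]
and the hypothesis $|f^+ - f^-| = |g^+ - g^-|$ closes it.

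I expect no genuine obstacle: the statement is essentially a localized reformulation of Lemma \ref{gluezero}. One can in fact package the argument by first applying Lemma \ref{gluezero} to produce an admissible pair $(\wt\varphi, \wt\psi)$ for $(g,f,F)$ on $I_0$, then rescaling it into $I$; the point is that this rescaled pair has the same triple average $(g,f,F)$ on $I$ as the piece it replaces, so only the Haar coefficient on $I$ itself is at risk of being disturbed, and that coefficient is precisely the one controlled by the hypothesis $|f^+ - f^-| = |g^+ - g^-|$ already used in Lemma \ref{gluezero}.
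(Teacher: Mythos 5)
Your argument is correct and, modulo small bookkeeping slips, is exactly the argument the paper has in mind when it writes that the proof of this lemma ``is essentially the same as the proof of the Lemma \ref{gluezero}'': you first show that the surgery preserves all three averages on $I$ (hence on every dyadic $J\supseteq I$, in particular on $I_0$), then check the Haar coefficients case by case, with the hypothesis $|f^+-f^-|=|g^+-g^-|$ doing its only work on $J=I$ exactly as in Lemma \ref{gluezero}. Two inclusions should be corrected for the case split to be airtight: the first clause should read $I\subsetneq J$ rather than $I\subseteq J$ (the ``preserved averages on the halves of $J$'' claim is false when $J=I$, which you of course treat separately), and the second should read $J\subseteq I_\pm$ rather than $J\subsetneq I_\pm$, so that $J=I_\pm$ is covered; as stated these two endpoints of the partition fall through. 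These are typos rather than gaps --- the correct argument for each is the one you already give --- but worth fixing since the point of spelling the lemma out is precisely to verify that every dyadic $J$ is accounted for.
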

Essentially this lemma says that if we have pairs $(\varphi_\pm, \psi_\pm)$, and and a pair $(\varphi, \psi)$ defined in the Lemma \ref{gluezero}, then we can split this pair into $(\varphi^\pm, \psi^\pm)$, defined on $I^\pm$ correspondingly. The proof of the Lemma \ref{glue} is essentially the same as the proof of the Lemma \ref{gluezero}.
\subsection{Change of variables}
It will be more convenient for us to work in variables
$$
y_1 = \frac{f-g}2, \; \; \; y_2 = \frac{-f-g}2, \;\;\; F.
$$
We define $M(y_1, y_2, F)=B(g,f,F)$. Then all properties of $B$ are easily translated to properties of $M$. Moreover, the ``splitting'' lemmas \ref{gluezero}, \ref{glue} remain true for fixed $y_1$ or fixed $y_2$.

If we have a point $(y_1, y_2, F)$ then by $(\varphi_{(y_1, y_2, F)}, \psi_{(y_1, y_2, F)})$ we denote an admissible pair for this point. An individual function $\varphi_{(y_1, y_2, F)}$ is always such that there is a function $\psi_{(y_1, y_2, F)}$, such that the pair $(\varphi_{(y_1, y_2, F)}, \psi_{(y_1, y_2, F)})$ is admissible for $(y_1, y_2, F)$.

\subsection{The proof of $\B \geqslant B$}
We will work in the $y$-variables. In these variables it is true that the function $M$ is concave when $y_1$ or $y_2$ is fixed. This is proved in the Theorem \ref{tuda}. 
Analogously to the previous definition, we define
$$
\mathcal{M}(y_1,y_2,F)=\B(g,f,F).
$$
We first prove that 
$$
\M(1,1, F)\geqslant M(1,1,F).
$$
Fix a large integer $r$ and set $\delta=\frac1{2^r}$. We notice the following chain of inequalities:
\begin{multline}
\M(1,1,F)\geqslant \frac12 \left( \M(1,1-\d, F+\d(1-F)) + \M(1,1+\d, F-\d(1-F))\right) = \\=\frac12 \left( \M(1,1-\d, F+\d(1-F)) + \M(1+\d, 1, F-\d(1-F))\right).
\end{multline}
Applying the same concavity we see that
$$
\M(1,1-\d, F+\d(1-F)) \geqslant \delta \M(1,0,1) + (1-\delta)\M(1,1,F) = \delta + (1-\delta)\M(1,1,F).
$$
Moreover, by the concavity
\begin{multline}
\M(1+\d, 1, F-\d(1-F))\geqslant \\
(\d-\d^2)\M(1+\d, 0, 1+\d) + (1-\d) \M(1+\d, 1+\d, (1+\d)(F-\d(2-F))) + \d^2 \M(1+\d, 1, F-\d(1-F)) \geqslant \\ \d-\d^2 + (1-\d)\M(1,1,F-\d(2-F))
\end{multline}
Therefore, we get 
$$
\M(1,1,F) \geqslant \frac12 \left( \delta + (1-\d)\M(1,1,F) + \d-\d^2 + (1-\d)\M(1,1,F-\d(2-F))\right),
$$
or
$$
\M(1,1,F)\geqslant \frac{2\d-\d^2}{1+\d} + \frac{1-\d}{1+\d}\M(1,1,F-\d(2-F)).
$$
Notice that it is true for any $F$. 
We now denote
$$
F^k = 2-(2-F)(1+\d)^k.
$$
Then, clearly, $F^0=F$, and $F^{k+1} = F^k - \d(2-F^k)$.
With this notation we get
$$
\M(1,1,F)\geqslant \frac{2\d-\d^2}{1+\d}\sli_{k=0}^K \left(\frac{1-\d}{1+\d} \right)^k + \left(\frac{1-\d}{1+\d}\right)^{K+1}\cdot \M(1,1,F^{K+1}).
$$

\subsubsection{The case $F\geqslant 2$}
In this case we have $F^{k+1}\geqslant F^k$, and therefore the point $(1, 1+\d, F^k-\d(1-F^k))$ always lies in $\Omega$. Thus, we can take $K$ as huge as we want. Therefore,
$$
\M(1,1,F)\geqslant \frac{2\d-\d^2}{1+\d}\sli_{k=0}^\infty \left(\frac{1-\d}{1+\d} \right)^k = \frac{2\d - \d^2}{2\d}.
$$
This is true for arbitrary small $\d$, and thus $\M(1,1,F)\geqslant 1$.

\subsubsection{The case $F\leqslant 2$}
In this case to assure that $(1,1+\d, F^k-\d(1-F^k))\in \Omega$ we need $F^k-\d(1-F^k)\geqslant \d$, which implies
$$
(1+\d)^{K+1}\leqslant \frac{2}{2-F}.
$$
Take $K\in [ \frac{\log \frac{2}{2-F}}{\log(1+\delta)}-10, \frac{\log \frac{2}{2-F}}{\log(1+\delta)}+10]$, such that this inequality holds. Then we get 
$$
\M(1,1,F)\geqslant \frac{2\d-\d^2}{1+\d}\sli_{k=0}^K \left(\frac{1-\d}{1+\d} \right)^k = \frac{2\d-\d^2}{2\d} \left(1-\left(\frac{1-\d}{1+\d}\right)^{K+1}\right).
$$
It is only left to notice that with our choise of $K$ we have
$$
\left(\frac{1-\d}{1+\d}\right)^{K+1} \to \frac{(2-F)^2}{4}, \;\;\; \d\to 0,
$$
and therefore
$$
\M(1,1,F)\geqslant 1-\frac{(2-F)^2}{4}=M(1,1,F).
$$
%Finally, again by the main inequality,
%$$
%M(1+\d, 1-\d, F+\d^2(2-F)) \geqslant 
%$$dsf

We leave the proof of the general inequality $\M(y_1, y_2,F)\geqslant M(y_1, y_2,F)$ to the reader. In fact, it is a simple use of the concavity of $\M$ along the line that connects $(y_1,0,y_1)$ with $(y_1, y_2, F)$.

\subsection{Building the extremal sequense for points $(1, 1, F)$}
The aim of this Section is to prove that $B(g, f, F)\leqslant \B(g, f, F)$ by a construction of an extremal sequense of pairs $(\varphi_n, \psi_n)$.
For the sake of simplicity, we do it only for the case $f-g=2$.

Due to the homogeneity and symmetry of the function $B$ it is enough to prove that 
$$
B(g, f, F)\leqslant \B(g, f, F)
$$
for $f\geqslant 0$, $f-g=2$. In the new variables it means that we consider the case $y_1=1$, and $y_2\leqslant y_1=1$. As we have seen, for $f\geqslant -g$ we have $B(g, f, F)=\B(g, f, F)=1$, and so we need to consider the case $f\leqslant -g$, i.e. $y_2\geqslant 0$. 
We first build the $\ep$-extremizer for the point $(F, 1, 1)$.

Fix a large integer $r$ and let $\delta=2^{-r}$. As before, denote $I_0=[0,1]$. Also denote $J_i = [2^{-i}, 2^{-i+1})$, %$J_\infty = [0, 2^{-r-1})J$. 
Denote $m_i(x)=2^ix-1$ --- the linear function from $J_k$ onto $I_0$. %Also, $m_\infty(x) = 2^{r+1}x$.

%Suppose next that $K_i= 1-J_i = [1-2^{-i+1}, 1-2^{-1})$. Moreover, let $n_i(x)$ be the increasing linear map from $K_i$ onto $[0,1]$. 
We need the following lemma.
\begin{lemma}\label{constrrr}
Suppose $\delta=2^{-r}$ is small enough. Also, fix a small number $\ep>0$. Suppose $F^{1}=F-\delta(2-F)$, and the pair $(\varphi_{(1,1,F^1)}, \psi_{(1,1,F^1)})$ is admissible. Then there exists an admissible pair $(\varphi_{(1,1,F)}, \psi_{(1,1,F)})$ such that
\begin{equation}\label{eqqqqq}
|\{x\colon \psi_{(1,1,F)}\geqslant 0\}| \geqslant \frac{2\delta -\delta^2}{1+\delta} + \frac{1-\delta}{1+\delta} |\{x\colon \psi_{(1,1,F^1)}\geqslant 0\}| - \ep.
\end{equation}
\end{lemma}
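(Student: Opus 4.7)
The plan is to build $(\varphi_{(1,1,F)},\psi_{(1,1,F)})$ by an iterative self-similar construction that mirrors, step for step, the concavity chain already used to prove $\M(1,1,F) \geqslant \frac{2\delta-\delta^2}{1+\delta} + \frac{1-\delta}{1+\delta}\M(1,1,F^1)$. Every ``convex combination'' appearing in that chain will be realized on the dyadic tree by iterated applications of the splitting lemmas (Lemma \ref{gluezero} at the top, Lemma \ref{glue} further down), and the homogeneity and symmetry of Lemma \ref{simple} will handle the rescalings. Because $\delta=2^{-r}$, any split with weights that are dyadic rationals of denominator $2^{2r}$ can be carried out after enough dyadic bisections along a line of constant $y_1$ or $y_2$, and on such a line the martingale condition $|f^+-f^-|=|g^+-g^-|$ needed for those lemmas is automatic.

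First I apply Lemma \ref{gluezero} on $I_0=[0,1]$ so that the halves $[0,\tfrac12]$ and $[\tfrac12,1]$ realize the points $(1,1-\delta,F+\delta(1-F))$ and $(1,1+\delta,F-\delta(1-F))$ respectively; by the $f\to-f$ symmetry of Lemma \ref{simple}(2) the right half is equivalent to the point $(1+\delta,1,F-\delta(1-F))$. Inside the right half, iterated applications of Lemma \ref{glue} (keeping $y_1=1+\delta$ fixed) produce three sub-regions of total measures proportional to $\delta-\delta^2$, $1-\delta$, $\delta^2$, whose targets are respectively $(1+\delta,0,1+\delta)$, $(1+\delta,1+\delta,(1+\delta)F^1)$ and the original $(1+\delta,1,F-\delta(1-F))$. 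By the scaling of Lemma \ref{simple}(1) the middle sub-region is exactly a rescaling of $(1,1,F^1)$, so I place the given pair $(\varphi_{(1,1,F^1)},\psi_{(1,1,F^1)})$ there multiplied by $1+\delta$; the $\delta^2$-piece is filled by any admissible pair; and the boundary sub-region uses a pre-built $\ep'$-extremizer at $(1+\delta,0,1+\delta)$.

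Inside the left half, further iterated bisections split $(1,1-\delta,F+\delta(1-F))$ with $y_1=1$ fixed into the boundary point $(1,0,1)$ on a sub-region of measure-proportion $\delta$ (again filled by a pre-built $\ep'$-extremizer) and the original target $(1,1,F)$ on a sub-region of measure-proportion $1-\delta$. The latter triggers the recursion: I iterate the whole procedure on that sub-region. After $N$ iterations the still-unfilled ``$(1,1,F)$-region'' has measure $((1-\delta)/2)^N$, and I choose $N$ so that $((1-\delta)/2)^N<\ep/4$, filling the leftover by any admissible pair. Summing the contributions yields geometric series: the scaled $F^1$-extremizer pieces occupy total measure approaching $\frac{1-\delta}{1+\delta}$ and contribute $\frac{1-\delta}{1+\delta}|\{\psi_{(1,1,F^1)}\geqslant 0\}|$ up to a controlled error; the two families of boundary $\ep'$-extremizers together occupy measure approaching $\frac{2\delta-\delta^2}{1+\delta}$ and each contributes a $(1-\ep')$-fraction of its measure to $\{\psi\geqslant 0\}$; the junk and unfilled pieces contribute at most $(\delta^2+\ep/4)/(1+\delta)$. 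Choosing $\ep'$ sufficiently small and $N$ sufficiently large absorbs the cumulative error below $\ep$, which is exactly \eqref{eqqqqq}.

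The main obstacle I expect is the construction of the boundary $\ep'$-extremizers at $(1,0,1)$ and $(1+\delta,0,1+\delta)$: these sit on the face $F=|f|$ of $\Omega$, where the admissibility forces $\varphi\geqslant 0$ almost everywhere but not constant, so achieving $|\{\psi\geqslant 0\}|>1-\ep'$ still requires a genuine fractal construction at a degenerate parameter; I expect it can be handled by a parallel concavity argument along a suitable boundary line of $\Omega$, set up once and for all before the recursion begins. A secondary nuisance is verifying that each dyadic bisection used to implement a multi-way split lies on a $y_1$- or $y_2$-constant line, so that Lemma \ref{glue} actually applies, but this is guaranteed by construction since every splitting we use in the chain keeps one of $y_1$, $y_2$ fixed.
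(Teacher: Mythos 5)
Your construction follows the paper's own five-way split step for step: the initial bisection of $I_0$ into $(1,1-\delta,F+\delta(1-F))$ and $(1,1+\delta,F-\delta(1-F))$, the sign flip identifying the second with $(1+\delta,1,F-\delta(1-F))$, the subsequent split of the right half into a $(\delta-\delta^2)$ boundary piece, a $(1-\delta)$ rescaled-$F^1$ piece and a $\delta^2$ junk piece, and the split of the left half into a $\delta$ boundary piece plus a $(1-\delta)$ self-similar piece. The one structural difference is how you resolve the self-similar part: the paper realizes the recursion exactly by building a genuine fixed point, taking the $L^2(I_0)$ limit of the iterates $(\varphi^n_{(1,1,F)},\psi^n_{(1,1,F)})$ and then verifying admissibility of the limit, whereas you truncate after $N$ steps, fill the residual $((1-\delta)/2)^N$-measure region with any admissible pair, and absorb the truncation loss into the $\ep$ budget. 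Both work; yours is slightly more elementary (no limit to pass to) at the cost of an extra error term, while the paper's produces an exactly self-similar (fractal) extremizer pair, which is the object of interest in its own right.

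One remark on your stated ``main obstacle.'' The $\ep'$-extremizers at the points $(1,0,1)$ and $(1+\delta,0,1+\delta)$ are not fractal or degenerate in any serious sense. In $(g,f,F)$-coordinates the point $(1,0,1)$ is $(g,f,F)=(-1,1,1)$, where $B=1$, and the extremizer is essentially a concentrated profile: take $\varphi=2^n\chi_{[0,2^{-n})}$ and set $\ep_I=-1$ along the chain of dyadic intervals from $I_0$ down to $[0,2^{-n})$ (the other signs are irrelevant since the remaining Haar coefficients of $\varphi$ vanish); then $\psi=-2^n\chi_{[0,2^{-n})}$, which is $\geqslant 0$ on a set of measure $1-2^{-n}$. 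Scaling by $1+\delta$ handles $(1+\delta,0,1+\delta)$. So this step is a two-line computation, not a parallel boundary foliation argument. The rest of your bookkeeping (the geometric series summing to $\tfrac{2\delta-\delta^2}{1+\delta}$ and $\tfrac{1-\delta}{1+\delta}$, and choosing $N$ and $\ep'$ to keep the total loss below $\ep$) is correct.
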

\begin{proof}
First, we explain our strategy. In what follows, we always assume that functions on the right-hand side are already defined. We specify their definition later; however, we clearly indicate points to which the functions are admissible. 

We define
$$
\varphi_{(1,1,F)}(x) = \begin{cases} \varphi_{(1, 1-\delta, F+\delta(1-F))}(m_1(x)), &x\in J_1\\
\varphi_{(1, 1+\delta, F-\delta(1-F))}(2x), &x\in [0, \frac12). \end{cases}
\;\;\;\;
\psi_{(1,1,F)}(x) = \begin{cases} \psi_{(1, 1-\delta, F+\delta(1-F))}(m_1(x)), &x\in J_1\\
\psi_{(1, 1+\delta, F-\delta(1-F))}(2x), &x\in [0, \frac12). \end{cases}
$$
This splitting is illustrated on the following picture.
\begin{center}
\includegraphics{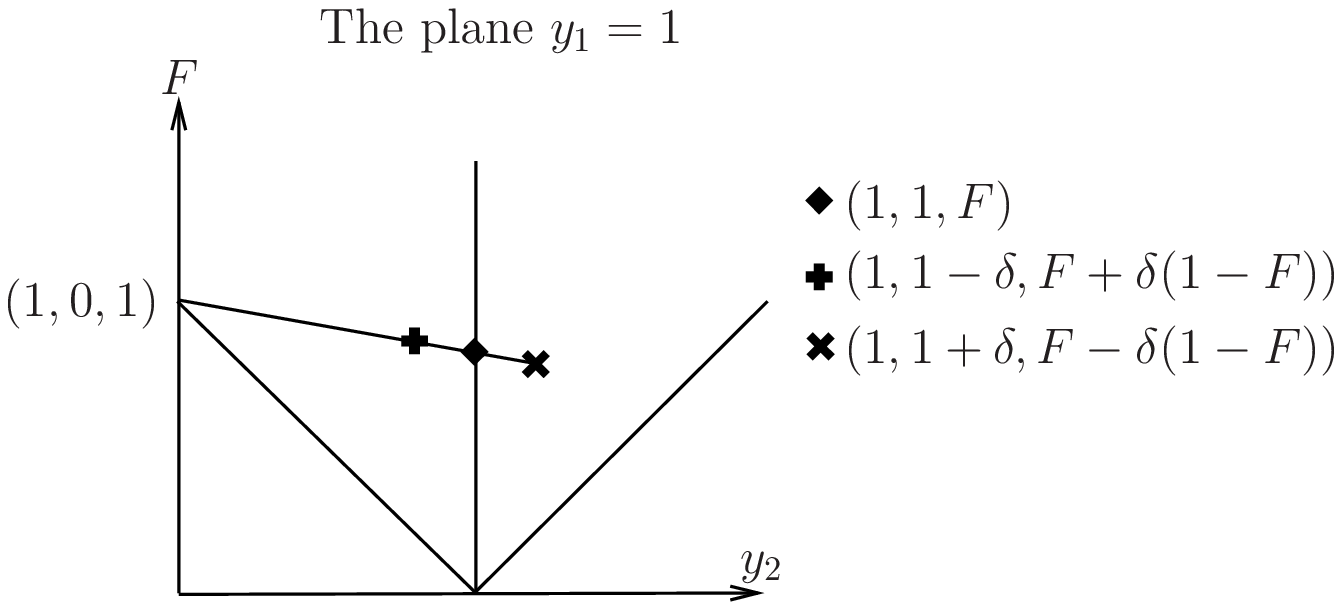}
\end{center}
By the Lemma \ref{glue} we see that $\psi_{(1,1,F)}$ is a martingale transform of $\varphi_{(1,1,F)}$. We define next
\begin{multline}
\varphi_{(1,1,F)}(x) = \begin{cases} \varphi_{(1, 0,1)}(\frac{m_1(x)}{\delta})), &x\in m_1^{-1}(\delta I_0)\\
\varphi_{(1,1,F)}(m_k(m_1(x))), &x\in m_1^{-1}m_k^{-1}(I_0), \; k=1\ldots r \\
-\varphi_{(1+\delta, 1, F-\delta(1-F))}(2x), &x\in [0, \frac12)
. \end{cases}
\;\;\;\; \\
\varphi_{(1,1,F)}(x) = \begin{cases} \psi_{(1, 0,1)}(\frac{m_1(x)}{\delta})), &x\in m_1^{-1}(\delta I_0)\\
\psi_{(1,1,F)}(m_k(m_1(x))), &x\in m_1^{-1}m_k^{-1}(I_0), \; k=1\ldots r \\
\psi_{(1+\delta, 1, F-\delta(1-F))}(2x), &x\in [0, \frac12)
. \end{cases}
\end{multline}

By the Lemma \ref{simple} and a multiple application of the Lemma \ref{glue}, we still get an admissible pair for the point $(1,1,F)$. 

Finally, define 

\begin{multline}
\varphi_{(1+\delta, 1, F-\delta(1-F))}(x) = \begin{cases} 
\varphi_{(1+\delta, 0, 1+\delta)}(m_k(\frac x\delta)), &x\in \delta \cdot J_k, \; k=1\ldots r\\
\varphi_{(1+\delta, 1, F-\delta(1-F))}(\frac{x}{\delta^2}), &x\in [0, \delta^2)\\
(1+\delta)\varphi_{(1, 1, F-\delta(2-F))}(m_k(x)), &x\in J_k, \; k=1\ldots r\\
\end{cases}
\\
\psi_{(1+\delta, 1, F-\delta(1-F))}(x) = \begin{cases} 
\psi_{(1+\delta, 0, 1+\delta)}(m_k(\frac x\delta)), &x\in \delta \cdot J_k, \; k=1\ldots r\\
\psi_{(1+\delta, 1, F-\delta(1-F))}(\frac{x}{\delta^2}), &x\in [0, \delta^2)\\
(1+\delta)\psi_{(1, 1, F-\delta(2-F))}	(m_k(x)), &x\in J_k, \; k=1\ldots r\\
\end{cases}
\end{multline}

This splitting is illustrated below.
\begin{center}
\includegraphics{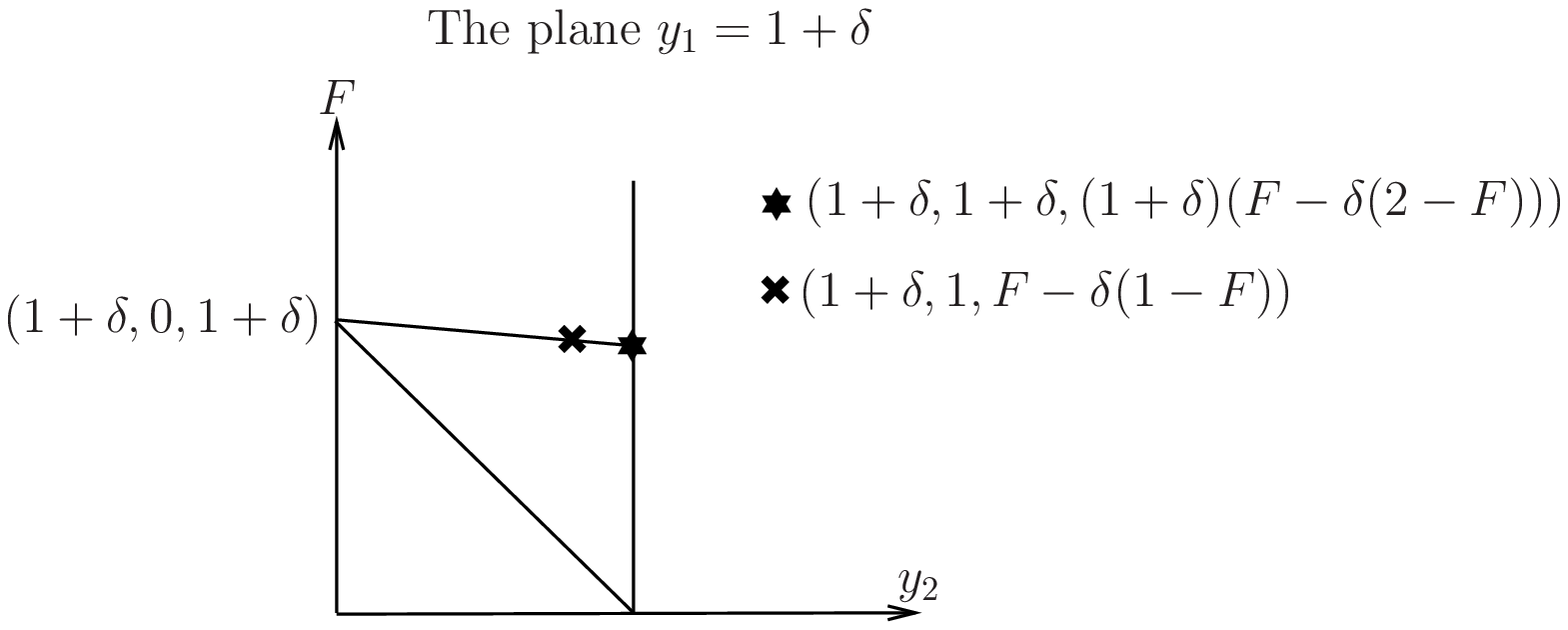}
\end{center}

Again, the Lemma \ref{simple} and the Lemma \ref{glue} assure that the defined pair is admissible.

Bringing everything together, we get

\begin{multline}\label{recccc}
\varphi_{(1,1,F)}(x) = \begin{cases} \varphi_{(1, 0,1)}(\frac{m_1(x)}{\delta})), &x\in m_1^{-1}(\delta I_0)\\
\varphi_{(1,1,F)}(m_k(m_1(x))), &x\in m_1^{-1}m_k^{-1}(I_0), \; k=1\ldots r \\
-\varphi_{(1+\delta, 0, 1+\delta)}(m_k(\frac{2x}{\delta})), &x\in \frac{\delta}{2}m_k^{-1}(I_0), \; k=1\ldots r\\
-\varphi_{(1+\delta, 1, F-\delta(1-F))}(\frac{2x}{\delta^2}), &x\in [0, \frac{\delta^2}{2})\\
-(1+\delta)\varphi_{(1, 1, F-\delta(2-F))}(m_k(2x)), &x\in \frac12 m_k^{-1}(I_0), \; k=1\ldots r
. \end{cases}
\;\;\;\; \\
\psi_{(1,1,F)}(x) = \begin{cases}  \psi_{(1, 0,1)}(\frac{m_1(x)}{\delta})), &x\in m_1^{-1}(\delta I_0)\\
\psi_{(1,1,F)}(m_k(m_1(x))), &x\in m_1^{-1}m_k^{-1}(I_0), \; k=1\ldots r \\
\psi_{(1+\delta, 0, 1+\delta)}(m_k(\frac{2x}{\delta})), &x\in \frac{\delta}{2}m_k^{-1}(I_0), \; k=1\ldots r\\
\psi_{(1+\delta, 1, F-\delta(1-F))}(\frac{2x}{\delta^2}), &x\in [0, \frac{\delta^2}{2})\\
(1+\delta)\psi_{(1, 1, F-\delta(2-F))}(m_k(2x)), &x\in \frac12 m_k^{-1}(I_0), \; k=1\ldots r
. \end{cases}
\end{multline}
We put two pictures together to show all five points involved in the splitting.
\begin{center}
  \includegraphics[width=.99\linewidth]{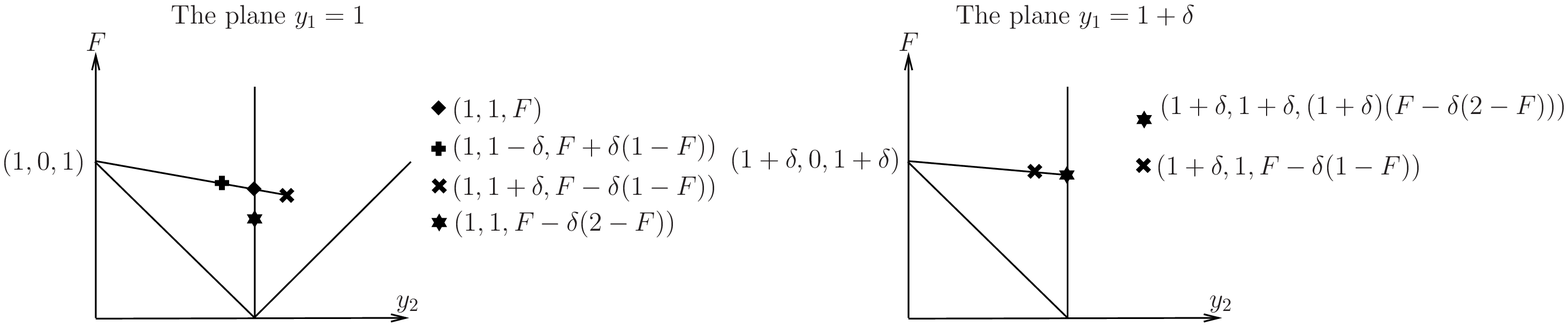}
 \end{center}
%We need to notice that $m_k(2x)=m_{k+1}(x)$, and $m_k(\frac{2x}{\delta})=m_{k+r+1}(x)$. Thus, we get
%
%\begin{multline}
%\varphi_{(1,1,F)}(x) = \begin{cases} \varphi_{(1, 0,1)}(\frac{m_1(x)}{\delta}), &x\in m_1^{-1}(\delta I_0)\\
%\varphi_{(1,1,F)}(m_k(m_1(x))), &x\in m_1^{-1}(J_k), \; k=1\ldots r \\
%-\varphi_{(1+\delta, 0, 1+\delta)}(m_{k+r+1}(x), &x\in J_{k+r+1}, \; k=1\ldots r\\
%-\varphi_{(1+\delta, 1, F-\delta(1-F))}(\frac{2x}{\delta^2}), &x\in [0, \frac{\delta^2}{2})\\
%-(1+\delta)\varphi_{(1, 1, F-\delta(2-F))}(m_{k+1}(x)), &x\in J_{k+1}, \; k=1\ldots r
%. \end{cases}
%\;\;\;\; \\
%\psi_{(1,1,F)}(x) = \begin{cases} \psi_{(1, 0,1)}(\frac{m_1(x)}{\delta}), &x\in m_1^{-1}(\delta I_0)\\
%\psi_{(1,1,F)}(m_1(m_k(x))), &x\in m_k^{-1}(J_1), \; k=1\ldots r \\
%\psi_{(1+\delta, 0, 1+\delta)}(m_{k+r+1}(x), &x\in J_{k+r+1}, \; k=1\ldots r\\
%\psi_{(1+\delta, 1, F-\delta(1-F))}(\frac{2x}{\delta^2}), &x\in [0, \frac{\delta^2}{2})\\
%(1+\delta)\psi_{(1, 1, F-\delta(2-F))}(m_{k+1}(x)), &x\in J_{k+1}, \; k=1\ldots r
%. \end{cases}
%\end{multline}

We now specify the definition of functions on the right-hand side. The pair $(\varphi_{(1,0,1)}, \psi_{(1,0,1)})$ is a $\frac\ep 2$-extremizer for the point $(1,0,1)$. The pair $(\varphi_{(1+\delta, 0, 1+\delta)}, \psi_{(1+\delta, 0, 1+\delta)})$ is a $\frac{\ep}{\delta-\delta^2}$-extremizer for the point $(1+\delta, 0, 1+\delta)$. 

The pair $(\varphi_{(1,1,F-\delta(2-F))}, \psi_{(1,1,F-\delta(2-F))})$ is given in the lemma. As for the pair $(\varphi_{(1+\delta, 1, F-\delta(1-F))}, \psi_{(1+\delta, 1, F-\delta(1-F))})$ --- we take any admissible pair for this point.

It is an easy calculation that the function $\psi_{(1,1,F)}$ satisfies the inequality \eqref{eqqqqq}. Moreover, it is easy to see that for {\bf any} pair, defined by \eqref{recccc} we have $\av{\varphi_{(1,1,F)}}{I_0} - \av{\psi_{(1,1,F)}}{I_0} = 2$. Thus, what we need to show is that there exists an admissible pair $(\varphi_{(1,1,F)}, \psi_{(1,1,F)})$ that satisfies the self-similarity condition \eqref{recccc}

To do that, we first take any admissible pair $(\tilde{\varphi}_{(1,1,F)}, \tilde{\psi}_{(1,1,F)})$ and define
\begin{multline}%\label{recccc}
\varphi^0_{(1,1,F)}(x) = \begin{cases} \varphi_{(1, 0,1)}(\frac{m_1(x)}{\delta})), &x\in m_1^{-1}(\delta I_0)\\
\tilde{\varphi}_{(1,1,F)}(m_k(m_1(x))), &x\in m_1^{-1}m_k^{-1}(I_0), \; k=1\ldots r \\
-\varphi_{(1+\delta, 0, 1+\delta)}(m_k(\frac{2x}{\delta})), &x\in \frac{\delta}{2}m_k^{-1}(I_0), \; k=1\ldots r\\
-\varphi_{(1+\delta, 1, F-\delta(1-F))}(\frac{2x}{\delta^2}), &x\in [0, \frac{\delta^2}{2})\\
-(1+\delta)\varphi_{(1, 1, F-\delta(2-F))}(m_k(2x)), &x\in \frac12 m_k^{-1}(I_0), \; k=1\ldots r
. \end{cases}
\;\;\;\; \\
\psi^0_{(1,1,F)}(x) = \begin{cases}  \psi_{(1, 0,1)}(\frac{m_1(x)}{\delta})), &x\in m_1^{-1}(\delta I_0)\\
\tilde{\psi}_{(1,1,F)}(m_k(m_1(x))), &x\in m_1^{-1}m_k^{-1}(I_0), \; k=1\ldots r \\
\psi_{(1+\delta, 0, 1+\delta)}(m_k(\frac{2x}{\delta})), &x\in \frac{\delta}{2}m_k^{-1}(I_0), \; k=1\ldots r\\
\psi_{(1+\delta, 1, F-\delta(1-F))}(\frac{2x}{\delta^2}), &x\in [0, \frac{\delta^2}{2})\\
(1+\delta)\psi_{(1, 1, F-\delta(2-F))}(m_k(2x)), &x\in \frac12 m_k^{-1}(I_0), \; k=1\ldots r
. \end{cases}
\end{multline}

Then the pair $(\varphi^0_{(1,1,F)}, \psi^0_{(1,1,F)})$ is admissible to point $(1,1,F)$. It is true by the Lemma \ref{glue}, and by an easy calculation that shows that all averages are as we need. 
We now define inductively
\begin{multline}%\label{recccc}
\varphi^{n+1}_{(1,1,F)}(x) = \begin{cases} \varphi_{(1, 0,1)}(\frac{m_1(x)}{\delta})), &x\in m_1^{-1}(\delta I_0)\\
\varphi^n_{(1,1,F)}(m_k(m_1(x))), &x\in m_1^{-1}m_k^{-1}(I_0), \; k=1\ldots r \\
-\varphi_{(1+\delta, 0, 1+\delta)}(m_k(\frac{2x}{\delta})), &x\in \frac{\delta}{2}m_k^{-1}(I_0), \; k=1\ldots r\\
-\varphi_{(1+\delta, 1, F-\delta(1-F))}(\frac{2x}{\delta^2}), &x\in [0, \frac{\delta^2}{2})\\
-(1+\delta)\varphi_{(1, 1, F-\delta(2-F))}(m_k(2x)), &x\in \frac12 m_k^{-1}(I_0), \; k=1\ldots r
. \end{cases}
\;\;\;\; \\
\psi^{n+1}_{(1,1,F)}(x) = \begin{cases}  \psi_{(1, 0,1)}(\frac{m_1(x)}{\delta})), &x\in m_1^{-1}(\delta I_0)\\
\psi^n_{(1,1,F)}(m_k(m_1(x))), &x\in m_1^{-1}m_k^{-1}(I_0), \; k=1\ldots r \\
\psi_{(1+\delta, 0, 1+\delta)}(m_k(\frac{2x}{\delta})), &x\in \frac{\delta}{2}m_k^{-1}(I_0), \; k=1\ldots r\\
\psi_{(1+\delta, 1, F-\delta(1-F))}(\frac{2x}{\delta^2}), &x\in [0, \frac{\delta^2}{2})\\
(1+\delta)\psi_{(1, 1, F-\delta(2-F))}(m_k(2x)), &x\in \frac12 m_k^{-1}(I_0), \; k=1\ldots r
. \end{cases}
\end{multline}

Then for any $n$ we get an admissible pair to the point $(1,1,F)$. 

We need to notice that
\begin{multline}
\ili_{I_0}|\varphi^{n+1}_{(1,1,F)} - \varphi^n_{(1,1,F)}|^2dx = \sli_k \frac{|J_k|}2 \ili_{I_0}|\varphi_{(1,1,F)}^{n}-\varphi_{(1,1,F)}^{n-1}|^2 dx = \frac{1-\delta}2 \ili_{I_0}|\varphi_{(1,1,F)}^{n}-\varphi_{(1,1,F)}^{n-1}|^2 dx = \\ = (\frac{1-\delta}2)^n \ili_{I_0}|\varphi_{(1,1,F)}^{1}-\varphi_{(1,1,F)}^{0}|^2 dx.
\end{multline}
Thus, we can take
$$
\varphi_{(1,1,F)} = \lim \varphi^{n+1}_{(1,1,F)} \;\;\; \mbox{in} \;\;\; L^2(I_0).
$$
Similarly 
$$
\psi_{(1,1,F)} = \lim \psi^{n+1}_{(1,1,F)} \;\;\; \mbox{in} \;\;\; L^2(I_0).
$$
It is clear that the pair $(\varphi_{(1,1,F)}, \psi_{(1,1,F)})$ satisfies the self-similarity conditions \eqref{recccc}. Moreover, since the limit in $L^2$ implies the limin in $L^1$, we get that all the averages are as needed. Moreover, for every interval $I$:
$$
|(\varphi_{(1,1,F)}, h_I)| = \lim |(\varphi^n_{(1,1,F)}, h_I)| = |(\psi^n_{(1,1,F)}, h_I)| = |(\psi_{(1,1,F)}, h_I)|,
$$
and thus we get an admissible pair. The proof of the lemma is finished.
\end{proof}

We are now ready to finish the whole construction. 
We consider a sequence 
$$
F^k = 2-(2-F)(1+\delta)^k.
$$
Then it is clear the $F^0 = F$ and $F^{k+1} = F^k - \delta(2-F^k)$. 
\subsubsection{The case $F\geqslant 2$}
We take a huge number $N$ and a small number $\ep$. Take any admissible pair $(\varphi_{(1,1,F^N)}, \psi_{(1,1,F^N)})$. Using the Lemma \ref{constrrr} $N$ times we build an admissible pair $(\varphi_{(1,1,F)}, \psi_{(1,1,N)})$. Moreover, we get
$$
|\{x\colon \psi_{(1,1,F)}(x)\geqslant 0\}| \geqslant \frac{2\delta - \delta^2}{1+\delta}\sli_{k=0}^N \left(\frac{1-\delta}{1+\delta}\right)^k - N\ep. 
$$
We now specify the choise of $\delta$, $N$ and $\ep$. We first fix a small $\delta$, so that $\frac{2\delta - \delta^2}{2\delta}=1-\sigma$. Then fix a huge number $N$, such that $\sli_{k=0}^N \left(\frac{1-\delta}{1+\delta}\right)^k > \frac{1+\delta}{2\delta}-\sigma\frac{1+\delta}{2\delta-\delta^2}$. Finally, fix a very small number $\ep$, such that $N\ep < \sigma$. Then we get

$$
|\{x\colon \psi_{(1,1,F)}(x)\geqslant 0\}| \geqslant \frac{2\delta - \delta^2}{1+\delta}\left(\frac{1+\delta}{2\delta}- \sigma\frac{1+\delta}{2\delta-\delta^2}\right) -\sigma= 1-3\sigma.
$$
where $\sigma$ is an arbitrary small number. 

\subsubsection{The case $F<2$}
We remind that our very first step requires that the point $(1, 1+\delta, F-\delta(1-F))$ to be in our domain. Thus, the on the $N$-th iteration we need that the point $(1, 1+\delta, F^N-\delta(1-F^N))$ is in the domain $\Om = \{(y_1, y_2, F)\colon F\geqslant |y_1-y_2| \}$. This yields to the inequality
$$
(1+\delta)^{N+1}<\frac{2}{2-F}.
$$
Thus, we should stop at the $K$-th step with
$$
(1+\delta)^{N+1}\approx\frac{2}{2-F}.
$$
Here the sign ``$\approx$'' means that 
$$
N\in [ \frac{\log \frac{2}{2-F}}{\log(1+\delta)}-10, \frac{\log \frac{2}{2-F}}{\log(1+\delta)}+10].
$$

We again apply the Lemma \ref{constrrr} $N$ times and get
$$
|\{x\colon \psi_{(1,1,F)}(x)\geqslant 0\}| \geqslant \frac{2\delta - \delta^2}{1+\delta}\sli_{k=0}^N \left(\frac{1-\delta}{1+\delta}\right)^k-N\ep=\frac{2\delta-\delta^2}{2\delta} \left(1- \left(\frac{1-\delta}{1+\delta} \right)^{N+1} \right) -N\ep
$$

Finally, since
$$
N\in [ \frac{\log \frac{2}{2-F}}{\log(1+\delta)}-10, \frac{\log \frac{2}{2-F}}{\log(1+\delta)}+10]
$$
we get that $\delta\to 0$ implies $1- \left(\frac{1-\delta}{1+\delta} \right)^{N+1} \to 1-\frac{(2-F)^2}4$, which finishes our proof.

\section{How to find the Bellman function $\mathcal{B}$}
In this section we explain how did we search for the function $\mathcal{B}$ and find it. We start with the following lemma.
\label{tuda} Let $x^\pm$ be two points in $\Omega$ such tat
$|f^+-f^-|=|g^+-g^-|$ and $x=\frac12(x^++x^-)$. Then
\begin{equation}
\label{mi} \mathcal{B}(x)-\frac{\mathcal{B}(x^+)+\mathcal{B}(x^-)}2\ge 0\,.
\end{equation}
\begin{proof}
Fix $x^\pm\in\Omega$, and let $(\varphi^\pm, \psi^\pm)$ be two pairs of functions giving the supremum
for $\B(x^+)$, $\B(x^-)$ respectively up to a small number $\eta>0$. Write
$$
\f^\pm=f^\pm+\!\!\!\!\sum_{I\subseteq I_0,\;I\in D}\!\!\!\!(\varphi, h_I) h\ci
I\,, \qquad \psi^\pm=g^\pm+\!\!\!\!\sum_{I\subseteq I_0,\;I\in D}
\!\!\!\!\ve\ci I (\varphi, h_I) h\ci I\,,
$$

Consider
$$
\f(t):=
\begin{cases}
\f^+(2t-1)\,,&\text{if }t\in [\frac12, 1]
\\
\f^-(2t)\,,&\text{if }t\in [0, \frac12).
\end{cases}
$$
and
$$
\psi(t):=
\begin{cases}
\psi^+(2t-1)\,,&\text{if }t\in [\frac12, 1]
\\
\psi^-(2t)\,,&\text{if }t\in [0, \frac12)
\end{cases}
$$
Since $|x^+_1-x^-_1|=|x^+_2-x^-_2|$, the function $\psi$ is a martingale
transform of $\f$, and the pair $(\varphi, \psi)$ is an admissible pair of the test
functions corresponding to the point $x$. Therefore,
\begin{align*}
\B(x)&\ge\frac1{|I_0|}\big|\{t\in I_0\colon \psi(t)\ge0\}\big|
\\
&=\frac1{2|I^+_0|}\big|\{t\in [\frac12, 1]\colon \psi(t)\ge0\}\big|+
\frac1{2|I^-_0|}\big|\{t\in [0, \frac12)\colon \psi(t)\ge0\}\big|
\\
&\ge\frac12\B(x^+)+\frac12\B(x^-)-2\eta.
\end{align*}
Since this inequality holds for an arbitrary small $\eta$, we can pass to the
limit $\eta\to0$, what gives us the required assertion.
\end{proof}
\begin{corollary}
The lemma means that if we change variables $f=y_1-y_2$,
$g	=-y_1-y_2$, and introduce a function $M(y_1, y_2, F):= B(g,f,F)$ defined in the
domain $G:=\{y=(y_1, y_2, F)\in\R^3\colon |y_1-y_2|\le F\}$, then we get that for each
fixed $y_2$, $M(F, y_1,\,\cdot)$ is concave and for each fixed $y_1$,
$M(F,\,\cdot\,,y_2)$ is concave.
\end{corollary}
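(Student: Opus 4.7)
The plan is to translate the midpoint concavity provided by Lemma~\ref{tuda} into the claimed concavity statement for $M$, through a short algebraic identification of the admissibility constraint $|f^+-f^-|=|g^+-g^-|$ in the new coordinates.

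First I would unpack the change of variables. With $f=y_1-y_2$ and $g=-y_1-y_2$, introduce $a:=y_1^+-y_1^-$ and $b:=y_2^+-y_2^-$; then $f^+-f^-=a-b$ and $g^+-g^-=-(a+b)$. Squaring, the identity $|f^+-f^-|=|g^+-g^-|$ becomes $(a-b)^2=(a+b)^2$, i.e.\ $ab=0$. Thus the hypothesis of Lemma~\ref{tuda} holds exactly when $x^+$ and $x^-$ agree in at least one of their $y$-coordinates.

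Next I would fix $y_2$, take arbitrary $(y_1^\pm,F^\pm)$ with $(y_1^\pm,y_2,F^\pm)\in G$, and note that the midpoint remains in $G$ by convexity of $G$. Since then $b=0$, Lemma~\ref{tuda} applies to the corresponding points $x^\pm$ in the original coordinates and, rewriting via $M(y_1,y_2,F)=B(g,f,F)$, yields
$$
M(y_1,y_2,F)\;\geq\;\tfrac12 M(y_1^+,y_2,F^+)+\tfrac12 M(y_1^-,y_2,F^-),
$$
where $y_1=\tfrac12(y_1^++y_1^-)$ and $F=\tfrac12(F^++F^-)$. This is midpoint concavity of the slice $(y_1,F)\mapsto M(y_1,y_2,F)$. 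The symmetric argument with $a=0$ gives midpoint concavity of $(y_2,F)\mapsto M(y_1,y_2,F)$.

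To upgrade midpoint concavity to full concavity on each (convex) slice, I would use continuity of $M$, which is immediate from the explicit formula: $B$ is smooth in the region $-g>F$, and at the boundary $-g=F$ the numerator $(g+F)^2$ vanishes so the two cases match continuously. Combined with midpoint concavity along a continuous curve in the convex slice, this upgrades to the usual concavity $M(\lambda u^++(1-\lambda)u^-)\geq \lambda M(u^+)+(1-\lambda)M(u^-)$ for every $\lambda\in[0,1]$. I do not anticipate any substantive obstacle; the only bookkeeping points are the coordinate algebra and checking that the midpoint stays in $G$, both of which are immediate.
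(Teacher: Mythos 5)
Your proof is correct and fills in exactly what the paper leaves implicit (the paper gives no explicit argument for this corollary, treating it as an immediate reformulation of the lemma). The key observation, that with $a=y_1^+-y_1^-$, $b=y_2^+-y_2^-$ one has $f^+-f^-=a-b$, $g^+-g^-=-(a+b)$, so the admissibility constraint $|f^+-f^-|=|g^+-g^-|$ becomes $ab=0$, is precisely the algebra the paper is alluding to; freezing $y_2$ (resp.\ $y_1$) forces $b=0$ (resp.\ $a=0$), and Lemma~\ref{tuda} then yields midpoint concavity of the corresponding slice. One small refinement worth making: your upgrade from midpoint concavity to genuine concavity invokes continuity of $M$ via the \emph{explicit} formula for $B$, but Lemma~\ref{tuda} concerns the abstract Bellman function $\mathcal{B}$ (the corollary's use of $B$ rather than $\mathcal{B}$ appears to be a slip in the paper, as this whole section is the ``discovery'' argument that precedes knowing $B=\mathcal{B}$). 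The cleaner, non-circular way to do the upgrade is to note that $\mathcal{B}$ is by definition bounded (its values are Lebesgue measures of subsets of $[0,1]$, hence lie in $[0,1]$), and a midpoint-concave function that is bounded above on a convex set is concave; this avoids any appeal to the still-unproved identification of $\mathcal{B}$ with the closed-form $B$. With that substitution the argument is airtight.
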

\subsection{The boundary $F=y_1-y_2$}
We start with considering a boundary case $F=f$ or, in the $y$ variables, $F=y_1-y_2$. It means that we consider only non-negative functions $\varphi$. 
By the homogeneity of the function $M$ we need to find a function $S$ of variable $s=\frac{y_1}{y_2}$, such that 
\begin{equation}\label{ineq111}
\left(S(\frac{y_1}{y_2})\right)''_{y_1y_1} \leqslant 0, \; \mbox{and} \; \left(S(\frac{y_1}{y_2})\right)''_{y_2y_2}\leqslant 0.
\end{equation}
We notice that when $g\to 0$ we have $s\to -1$ and we must have $S\to 0$. Thus, we get a condition
\begin{equation}
S(s)\to 0, \;\;\; \mbox{as} \;\;\; s\to -1.
\end{equation}
Moreover, we have seen that if $f\geqslant -g$ then $\mathcal{B}(g, f, F)=1$. In particular, it holds when $f=-g$. Therefore, we have $M(y_1, -y_1, 0)=1$. This implies that
$$
S(s)\to 1	, \;\;\; \mbox{as} \;\;\; s\to -\infty.
$$
From inequalities \eqref{ineq111} we get that
$$
S''(s)\leqslant 0, \;\;\;, s^2 S''(s) +2S'(s)\leqslant 0, \;\;\; s\in (-\infty, -1]. 
$$
Make the second inequality an equation (we are looking for the {\bf best} nontrivial $S$). We get
$$
S(s)=c_1 + \frac{c_2}s.
$$
The boundary conditions imply that
$$
S(s)=1-\frac1s,
$$
and therefore
$$
M(y_1, y_2, y_1-y_2)=1-\frac{y_2}{y_1} = \frac{y_1-y_2}{y_1},
$$
or
$$
B(g, f,f)=\frac{2f}{f-g}.
$$
Thus, we get an answer
\begin{equation}\label{BC1}
M(y_1, y_2, y_1-y_2)=\begin{cases} 1, & y_2\leqslant 0\\
\frac{y_1-y_2}{y_1}, &y_2\geqslant 0,
\end{cases}
\end{equation}
or
$$
B(g,f,f) = \begin{cases} 1, &f\geqslant -g\\
\frac{2f}{f-g}, &f\leqslant -g. \end{cases}
$$
\subsection{The domain $\Omega$}
We remind the reader that for a fixed $y_1$ the function $M$ is concave in variables $(F, y_2)$. We also remind the symmetry condition, i.e.
$$
M(y_1, y_2, F)=M(y_2, y_1, F).
$$
Let us differentiate this equation in $y_2$ and set $y_2=y_1$. Then we get an equation:
$$
M_{y_1}(y_1, y_1, F)=M_{y_2}(y_1, y_1, F).
$$
Moreover, due to the symmetry it is enough to find $M$ for $y_2 \leqslant y_1$. As before, we saw that for $f\geqslant -g$ we have $B(g, f, F)=1$, i.e. 
\begin{equation}\label{BC2}
\mbox{for $y_2\leqslant 0$, we have $M(y_1, y_2, F)=1$.}
\end{equation}
Thus, it is enough to consider the case $0\leqslant y_2 \leqslant y_1$. 
Denote $\Omega_{y_1} = \{(y_2, F)\colon F\geqslant |y_2-y_1|\}$ --- the section of $\Omega$ for fixed $y_1$. 
We want to find $M$ satisfying concavity in this hyperplane--we are going to look for $M$ (and we will check later that it is concave) that solves Monge--Amp\`ere (MA) equation in   $\Omega_{y_1}$ with boundary conditions \eqref{BC1} and \eqref{BC2}.
 In   $\Omega_{y_1}$, there  is a point
 $P:=(0,y_1, y_1)$. Let us make a guess that the characteristics (and we know by Pogorelov's theorem that they form the foliation of  $\Omega_{y_1}$ by straight lines) of  our MA equation in  $\Omega_{y_1}$ form the fan of lines with common point $P=(y_1, y_1, 0)$. By Pogorelov's theorem we also know that there exists functions $t_1, t_2, t$ constant on characteristics such that
 \begin{equation}
 \label{M}
 M=t_1 F+ t_2 y_2 +t\,,
 \end{equation}
 such that $t_1=t_1(t; y_1), t_2=t_2(t; y_1)$ (we think that $y_1$ is a parameter), that
 \begin{equation}
 \label{M1}
 0=(t_1)'_t F+( t_2)'_t y_2 +1\,,
 \end{equation}
 that
 \begin{equation}
 \label{M2}
 t_1=\frac{\partial M(\cdot, y_2, F)}{\partial F}\,,\, t_1=\frac{\partial M(\cdot, y_2, F)}{\partial y_2}\,.
 \end{equation}
 Let us call  characteristics $L_t$. Extend one of them from $P$ till $y_2=y_1$. We recall another boundary condition:
 \begin{equation}\label{Nor}
 \text{If} \,\, y_2=y_1\Rightarrow \frac{\partial M}{\partial y_2}= \frac{\partial M}{\partial y_1}\,.
 \end{equation}
 Or if we denote the intersection of $L_t$ with $y_2=y_1$ by $(y_1, y_1, F(t))$ we get

  \begin{equation}
 \label{Nor1}
 t_2(t;y_1)= \frac{\partial M}{\partial y_1}(y_1, y_1, F(t))\,.
 \end{equation}

 We want to prove now that

  \begin{equation}
 \label{Nor2}
 \text{On the whole}\,\, L_t \,\,\text{we have}\,\,F(t) t_1 + 2y_1 t_2=0\,.
 \end{equation}
 In fact, our $M$ is  $0$ homogeneous. So everywhere $FM'_F + y_1M'_{y_1} + y_2M'_{y_2}=0$. Apply this to point $(y_1, y_1, F(t))$, where we can use \eqref{Nor1} and get
 $F(t)t_1+t_2 y_1 + t_2 y_1=0$, which is \eqref{Nor2} in one point. But then all entries are constants on $L_t$, therefore, \eqref{Nor2} follows.

 Now use our guess that $L_t$ fan from $P=(y_1, y_1, 0)$. Plug this coordinates into $0=(t_1)'_t F+( t_2)'_t y_2 +1$, which is \eqref{M1}.  Then we get the crucial (and trivial) ODE
 \begin{equation}
 \label{t1}
 t_1'(t)=-\frac{1}{y_1}\Rightarrow t_1(t)=-\frac{1}{y_1}t +C_1(y_1) \,.
 \end{equation}

Let boundary line $F= y_1-u$ corresponds to $t=t_0$. Then we use \eqref{M} and \eqref{BC1}:
$$
 (  - \frac{1}{y_1}t _0+C_1(y_1)  )(y_1-u) + t_2 u+t_0 = 1-\frac{u}{y_1}\,.
 $$
 Using \eqref{Nor2} we can plug $t_2$ expressed via $F(t)$. But by definition $F(t_0)=0$. So we get
 $$
   (  - \frac{1}{y_1}t _0+C_1(y_1)  )(y_1-u) +t_0=  1-\frac{u}{y_1}\,.
   $$
   Or
   $$
   C_1(y_1)y_1 - (t_0 +C_1(y_1)y_1) \frac{u}{y_1} = 1 -\frac{u}{y_1}\,.
   $$
   Varying $u$ we get $C_1(y_1)=\frac1{y_1}$, $t_0=0$.   Now from \eqref{t1} we get
\begin{equation}
 \label{t11}
  t_1(t)=\frac1{y_1}(1-t) \,.
 \end{equation}

   After that \eqref{M1} and \eqref{Nor2} become the system of two linear ``ODE"s on $F(t)$ and $t_2(t)$:

 \begin{equation}
 \label{Ft2}
 \begin{cases}
 -\frac1{y_1} F(t) +y_1 t_2'(t)    +1 =0\\
 2y_1t_2(t) + F(t)\frac1{y_1}(1-t) =0\,.
 \end{cases}
 \end{equation}

  We find $t_2= -\frac1{y_1}(1-t)t$.  We find the arbitrary constant for $t_2$ by noticing that the second equation of \eqref{Ft2}  at $t_0=0$ implies that $t_2(0)=0$ as $F(t_0)=F(0)=0$ by definition.

Hence \eqref{M1} becomes
\begin{equation}
\label{M11}
-\frac1{y_1} F +\frac1{y_1} (2t-1) y_2 +1=0\,.
\end{equation}
 Given $(y_1, y_2, F)\in \Omega_{y_1}\cap \{0\le y_2\le y_1\}$, we find $t$ from \eqref{M11} and plug it into \eqref{M}, in which we know already $t_(t)$ and $t_2(t)$. Namely, we know that

\begin{equation}
\label{M01}
M(y_1, y_2, F) =\frac1{y_1} F -\frac1{y_1} t(1-t)y_2 +t \,.
\end{equation}

 Plugging $t=\frac12\frac{F-(y_1-y_2)}{y_2}$ from \eqref{M11} into this equation we finally obtain
 \begin{equation}
 \label{FBFeq0}
 M(y_1, y_2, F)=1-\frac{(F-y_1-y_2)^2}{4y_1y_2}.
 \end{equation}
We notice that on the line $F=y_2+y_1$ we get $M = 1$. Thus, we get the following answer for $M$:
\begin{equation}
 \label{FBFeq}
 M(y_1, y_2, F)=\begin{cases} 1-\frac{(F-y_1-y_2)^2}{4y_1y_2}, &F\leqslant y_1+y_2 \\
 1, &F\geqslant y_1+y_2. \end{cases}
 \end{equation}
In our initial coordinates we get
$$
B(g, f, F)=\begin{cases} 1-\frac{(F+g)^2}{g^2-f^2}, &F\leqslant -g \\
 1, &F\geqslant -g. \end{cases}
$$
\end{document}